\documentclass[letterpaper,10pt,3p,preprint]{elsarticle}

\usepackage{amsmath,amssymb,amsfonts}
\usepackage{bm}

\usepackage{xcolor}
\usepackage{graphicx}
\usepackage{subcaption}
\usepackage{comment}
\usepackage{soul}

\usepackage[hidelinks]{hyperref}

\numberwithin{equation}{section}
\newtheorem{theorem}{Theorem}
\newproof{proof}{Proof}

\newcommand{\Nbb}{\mathbb{N}}
\newcommand{\Rbb}{\mathbb{R}}
\newcommand{\Vcal}{\mathcal{V}}
\newcommand{\Ecal}{\mathcal{E}}
\newcommand{\vect}[1]{\bm{#1}}
\newcommand{\matr}[1]{\bm{#1}}
\DeclareMathOperator*{\argmin}{arg\,min}

\begin{document}

\begin{frontmatter}

\title{Machine-precision energy conservative reduced models
    for Lagrangian hydrodynamics by quadrature methods}

\author[1]{Chris Vales}
\ead{chris.vales@dartmouth.edu}
\author[2]{Siu Wun Cheung\corref{cor1}}
\ead{cheung26@llnl.gov}
\author[2]{Dylan M. Copeland}
\ead{copeland11@llnl.gov}
\author[2]{Youngsoo Choi}
\ead{choi15@llnl.gov}

\cortext[cor1]{Corresponding author.}
\affiliation[1]{organization={Department of Mathematics,
    Dartmouth College},
    city={Hanover},
    state={NH 03755},
    country={USA}}
\affiliation[2]{organization={Center for Applied Scientific
    Computing, Lawrence Livermore National Laboratory},
    city={Livermore},
    state={CA 94550},
    country={USA}}

\begin{abstract}
We present an energy conservative, quadrature based model reduction
framework for the compressible Euler equations of Lagrangian
hydrodynamics.
Building on a finite element discretization of the governing
equations, we develop reduced models using data based reduced
basis functions and the empirical quadrature procedure (EQP).
We introduce a strongly energy conservative variant of EQP that
enforces exact energy conservation in the reduction process.
Numerical experiments for four benchmark problems---Sedov blast,
Gresho vortex, triple point and Taylor–Green vortex---demonstrate
that the numerical implementation of our proposed method
conserves total energy to near machine precision, while
maintaining accuracy comparable to the basic EQP formulation.
\end{abstract}

\begin{keyword}
model reduction\sep hyper-reduction\sep numerical quadrature\sep
energy conservative\sep hydrodynamics\sep Lagrangian methods
\end{keyword}

\end{frontmatter}

\section{Introduction}\label{sec:intro}
High fidelity simulations of multiscale, multiphysics systems
often involve a large number of degrees of freedom evolving over widely
separated temporal and spatial scales, making them prohibitively
expensive for tasks such as design, control or uncertainty
quantification \cite{Benner2015,Peherstorfer2018}.
Model reduction methods---and the related closure or parametrization
methods---aim to construct low dimensional surrogate models that
retain the essential dynamics of the original system, while
reducing the computational cost associated with its numerical
simulation \cite{Sagaut2006,E2011,Stensrud2013}.
Reduced models can be derived using both physical modeling and
data based approaches, depending on the complexity of the
underlying dynamics and the availability of high quality
data, among other determining factors.
In either case, reduction strategies employ a wide variety of
mathematical tools, including both deterministic and
stochastic methods.

In this work we derive reduced models that govern the evolution
of the linear projection of the original state variables onto
a subspace spanned by a set of reduced basis functions.
Focusing on Lagrangian hydrodynamics, our approach combines
physical knowledge about the underlying system in the form of its
governing equations, with basis functions computed in a data
based fashion.

In the remainder of Section \ref{sec:intro}
we discuss projection based reduction methods and
modeling approaches for Lagrangian hydrodynamics,
followed by a summary of the main results of this work.
In Section \ref{sec:fom} we present the governing equations and
the discretization used to obtain the full model.
The reduced model is derived in Section \ref{sec:rom},
followed by a presentation of the EQP hyper-reduction method
and its implementation.
The energy conservative EQP method is developed in
Section \ref{sec:ceqp}.
Section \ref{sec:experiments} presents our numerical
results for four different problem cases using both EQP methods,
followed by a conclusion in Section \ref{sec:conclusion}.
Finally, \ref{app:implementation}
includes additional details on the numerical
implementation of the EQP methods.

\subsection{Projection based reduction}
Projection based reduction methods derive reduced models by
replacing the original state space of the dynamics with a
subspace defined as the linear span of a set of reduced
basis functions \cite{Benner2015,Swischuk2019}.
The reduced state variables are then defined by the linear
projection of the original state variables to the constructed
subspace.
The reduced basis functions that define the state space
of the reduced dynamics can be computed by various data
based methods, which usually employ the singular value
decomposition (SVD) or eigenvalue decomposition
of an appropriate data matrix.

The most common method is arguably the proper orthogonal
decomposition (POD), which uses the SVD of a snapshot data
matrix to construct an orthonormal basis for its range space
\cite{Aubry1991,Berkooz1993,Hinze2005}.
Given a collection of samples of the state variables,
POD yields the best linear fixed-rank approximation
to the data in $\Rbb^N$ with the euclidean inner product,
where $N\in\Nbb$ denotes the state space dimension.
An alternative is the dynamic mode decomposition (DMD)
\cite{Schmid2022},
which can be used to approximate the eigenspaces of
the system's Koopman operator restricted to the finite
dimensional subspace spanned by the sampled data
\cite{Budisic2012,Brunton2022,Colbrook2024}.

The POD and DMD methods can be viewed as specific instances
of kernel methods, where one employs a symmetric kernel
function $k$ to compute the pairwise distances matrix
$K_{ij}=k(u_i,u_j)$
from state samples $u_i$, $u_j\in\Rbb^N$,
and uses the eigenvectors of the kernel matrix $K$
as the reduced basis functions.
Compared to POD, general kernel methods replace
the correlation kernel based on the euclidean inner
product with the kernel function $k$,
thereby embedding the state samples into a
feature space before computing their pairwise distances
\cite{Schoelkopf2001}.
Kernel methods can also be used to approximate the
eigenspaces of the restricted Koopman operator by using
kernels that act on delay embedded state samples
\cite{Giannakis2019vsa}.

The effectiveness of linear subspace projection methods is generally
limited to problems where the solution manifold can be accurately
approximated by a low dimensional subspace.
This assumption is not satisfied in problems with a slow decay in the
Kolmogorov $n$-width, such as advection dominated problems that feature
sharp gradients, moving shock fronts and turbulence
\cite{Greif2019,Peherstorfer2022}.
One approach toward addressing this issue has been to construct local
reduced models that approximate the original dynamics for a specific
subset of the considered parameter-time domain
\cite{Amsallem2012,Peherstorfer2020aadeim}.
A reduction approach based on time windows was introduced in
\cite{Parish2021,Shimizu2021}
to build temporally local reduced models that are small but accurate
over a short time period for advection dominated problems.
This approach has since been further extended and applied to problems
in various domains
\cite{Copeland2022,Cheung2023,Cheung2024}.

The methods outlined above form a subspace of the original
state space and define the reduced state variables by linear
projection.
An alternative approach is to construct a nonlinear projection
of the original state variables onto an appropriate subspace,
often called a latent space.
The projection map is often represented by a neural
network, with the latent space defined by the architecture
of the employed neural network and its training procedure
\cite{Lee2020,Maulik2021,Fries2022,YKim2022}.

In addition to accuracy and reduced complexity, it is often
desirable for reduced models to preserve aspects of the
qualitative structure of the original dynamics,
such as conserved quantities or symplectic structure.
Several reduction schemes have been proposed that incorporate
structure preservation constraints directly into the
formulation of the reduced model, ensuring that the
relevant structure is not destroyed in the reduction process
\cite{Farhat2014,Kalashnikova2014,Farhat2015,Peng2016,Gong2017,
Carlberg2018,Greydanus2019,Swischuk2019,Hernandez2021,Hesthaven2022,
Sharma2023,Bajars2025,Gruber2025}.

\subsection{Hyper-reduction methods}
We consider a dynamical system with state space
$\Rbb^N$, $N\in\Nbb$,
governed by the differential equation
$\dot{u}=f(u)$
with state vector $u\in \Rbb^N$
and nonlinear function $f\colon\Rbb^N\to\Rbb^N$.
We assume access to an orthonormal reduced basis matrix
$\Phi\in\Rbb^{N\times n}$ with $n\ll N$,
where each column represents a reduced basis function.
Using the representation
$u\approx\Phi\hat{u}$, $\hat{u}\in\Rbb^n$,
we derive the governing equation for the reduced state
vector $\hat{u}$ by projection
\begin{equation*}
    \dot{\hat{u}}=\Phi^\top f(u)=\Phi^\top f(\Phi\hat{u}).
\end{equation*}
Although this is an equation for the $n\ll N$ reduced state
variables, the evaluation of the nonlinear function $f$ 
at every timestep depends on $\Phi\hat{u}\in\Rbb^N$
and is expected to dominate the model's computational cost.
For this reason, various \emph{hyper-reduction} methods
have been developed to approximate the evaluation of
$f$ in different ways.

Hyper-reduction methods can be divided into interpolation and
quadrature based methods.
Interpolation methods employ a set of collocation points
to approximate the evaluation of nonlinear function $f$,
an idea that can be traced back to \cite{Everson1995} 
in the context of image reconstruction. 
Examples of this class of methods include the
empirical interpolation method \cite{Barrault2004}
and its discrete variant \cite{Chaturantabut2010},
and employ various sampling strategies to select the collocation
points based on different optimality criteria 
\cite{Drmac2016,Drmac2018,Peherstorfer2020deim,Lauzon2024}.
Quadrature methods offer an alternative strategy for
efficiently approximating integrals of nonlinear
functions arising in finite element models.
They derive reduced quadrature rules that approximate
the spatial integration of functions over the computational
domain by requiring their evaluation at a limited
number of quadrature points.
The origin of this approach can be traced back to \cite{An2008}
in the context of computer graphics applications. 
Examples in this class of methods include
the energy conserving sampling and weighting method
\cite{Farhat2014,Farhat2015},
the empirical cubature method
\cite{Hernandez2017},
the EQP method
\cite{Patera2017,Yano2019,Du2022}
and more
\cite{Ryu2015,Devore2019}. 

\subsection{Lagrangian hydrodynamics}
We consider the compressible Euler equations of fluid dynamics
in a three dimensional spatial domain,
which can be used to model inviscid, high speed flows and
shock wave propagation
\cite{Serrin1959,Aris1989,Dobrev2012}.
The numerical solution of the equations can generally be achieved
by two classes of methods: Eulerian and Lagrangian methods.
In Eulerian methods the unknown variables are treated as
fields over a fixed computational mesh,
whereas in Lagrangian methods the mesh moves according to
the local fluid velocity
\cite{Benson1992}.

Lagrangian methods have traditionally been formulated using
approaches such as the staggered grid hydrodynamics
and the cell centered hydrodynamics methods,
which usually employ finite difference or finite volume
spatial discretizations.
The accuracy of these methods relies heavily on the quality
of the underlying mesh, which moves and deforms over time
and can lead to phenomena such as mesh tangling.
One way to address this issue has been the use of the
arbitrary Lagrangian-Eulerian method, which involves a
Lagrangian phase followed by a remesh and remap phase.
In the Lagrangian phase, the computational mesh evolves
according to the fluid velocity until its quality deteriorates
in a prescribed sense.
Once that happens, the mesh is adjusted accordingly and
the solution fields are remapped onto the adjusted mesh,
allowing the Lagrangian simulation to resume.
We refer the reader to \cite{Dobrev2012}
for details on the methods outlined above and their
historical development.

In this work we employ the Lagrangian method developed in
\cite{Dobrev2012},
which uses a high order finite element discretization of the
Euler equations in two and three dimensional cartesian grids.
Among other features, the employed method offers strong
conservation of mass and discrete total energy.
We refer the reader to \cite{Dobrev2012,Dobrev2013}
for the detailed development of the method and its
application to various benchmark problems.

\subsection{Main results}
We resume the line of research initiated in
\cite{Copeland2022,Cheung2023},
where the authors derived
projection based reduced models for Lagrangian hydrodynamics
problems using interpolation hyper-reduction methods.
In the present work we use quadrature hyper-reduction
methods, which are naturally well suited to the finite element
discretization used in the full model.
The main features of this work can be summarized as follows.
\begin{enumerate}
\item\emph{Projection based reduction}. 
Starting from a finite element (FEM) discretization of the
compressible Euler equations, we develop projection based
reduction models using reduced basis functions computed
from simulation data.
We derive two reduced problem formulations, demonstrating
that the reduction process corresponds to replacing the original
FEM trial and test functions with the reduced basis functions.
We use the EQP method to reduce the cost of the spatial integration
of nonlinear force functions that arises in the FEM discretization
process.
\item\emph{Conservative quadrature}.
We develop an energy conservative variant of the basic
EQP method that enforces exact conservation of the discrete
total energy in the strong sense in the hyper-reduced model.
This is achieved by deriving a reduced quadrature rule
that preserves the energy conservation property of the full model.
\item\emph{Applications}.
We implement the hyper-reduced models derived using the basic
and energy conservative EQP methods in the
\texttt{Laghos}\footnote{
Code repository: https://github.com/CEED/Laghos}
Lagrangian hydrodynamics simulation code using the
\texttt{libROM}\footnote{
Code repository: https://github.com/LLNL/libROM}
library for model reduction.
We apply the methods to four standard benchmark problems,
reporting their accuracy and obtained computational speedup.
Our numerical results show that the numerical implementation
of the energy conservative EQP method leads to conservation of total
energy to near machine precision for all simulated problem cases.
\end{enumerate}

\section{Full model}\label{sec:fom}
We consider the system of Euler equations of fluid dynamics in
a Lagrangian reference frame with no external body forces
\begin{equation}\label{eq:euler}
\begin{aligned}
    \text{mass conservation}:& &\frac{1}{\rho}\frac{d\rho}{dt}
        &=-\nabla\cdot v\\
    \text{momentum conservation}:& &\rho\frac{dv}{dt}
        &=\nabla\cdot\sigma\\
    \text{energy conservation}:& &\rho\frac{de}{dt}
        &=\sigma :\nabla v\\
    \text{equation of motion}:& &\frac{dx}{dt}&=v
\end{aligned}
\end{equation}
where $d/dt$ denotes the material derivative, 
$\rho$ the density of the fluid,
$x$ and $v$ the position and velocity of the
particles in a deformable medium $\Omega(t)$ in the
Eulerian coordinates,
$\sigma$ the deformation stress tensor, and
$e$ the internal energy per unit mass.
These physical quantities are treated as functions of time
$t\geq0$ and the initial configuration of particles
$x_0\in\Omega_0=\Omega(0)$
\cite{Dobrev2012,Aris1989,Serrin1959}.
The evolution equation for the position field $x(t,x_0)$
governs the Lagrangian advection of the fluid particles;
in the spatially discretized system, this leads to a
computational mesh that deforms according to the local
fluid velocity.

We use the isotropic stress tensor
$\sigma=-pI+\sigma_a$,
where $p$ denotes the thermodynamic pressure and
$\sigma_a$ the artificial viscosity stress.
The thermodynamic pressure is governed by a constitutive equation
and can be expressed as a function of the density and internal
energy.
In the present work we focus on the case of polytropic ideal
gases with an adiabatic index $\gamma>1$,
which yields the constitutive equation
\begin{equation}\label{eq:EOS}
  p=(\gamma-1)\rho e.
\end{equation}
The system is prescribed with an initial condition and the
wall boundary condition $v\cdot n=0$,
where $n$ denotes the outward normal unit vector on the
domain boundary.

\subsection{Spatial discretization}
We follow \cite{Dobrev2012}
to derive a semidiscrete Lagrangian variational formulation based on
a computational mesh that deforms according to the fluid velocity. 
By the Reynolds transport theorem, the mass conservation in
\eqref{eq:euler}
is related to the Jacobian $\nabla_{x_0} x(t,x_0)$
of the Lagrangian transformation $x(t,x_0)$ by 
\begin{equation*}
    \rho(t,x_0)=\frac{\rho(0,x_0)}{\vert J(t,x_0)\vert}
\end{equation*}
where $J(t,x_0)=\nabla_{x_0}x(t,x_0)$
is the deformation gradient and
$\vert J\vert$ its determinant
\cite{Dobrev2012}.
The above equation can be used to determine the density field for
all $t\geq 0$.
As a result, from now on we focus on the differential equations
for the state variable $w=(v,e,x)$.

For the spatial discretization of
\eqref{eq:euler}
we employ a finite element method (FEM)
using a kinematic space
$\Vcal\subset[H^1(\Omega_0)]^d$
with spatial dimension $d\in\Nbb$
and basis $\{\theta_{v,i}\}_{i=0}^{N_v-1}$
for approximating the position and velocity fields,
and a thermodynamic space
$\Ecal\subset L_2(\Omega_0)$ 
with basis $\{\theta_{e,i}\}_{i=0}^{N_e-1}$
for approximating the energy field,
where $N_v$, $N_e\in\Nbb$ denote the number of
the global degrees of freedom in the corresponding spaces.
In what follows, the FEM basis functions are used as both trial
and test functions.

We approximate the full solution fields by
\begin{equation*}
\begin{aligned}
    v(t,x_0)\approx v_f(t,x_0)&=
        \sum_{i=0}^{N_v-1}v_i(t)\theta_{v,i}(x_0)\\
    e(t,x_0)\approx e_f(t,x_0)&=
        \sum_{i=0}^{N_e-1}e_i(t)\theta_{e,i}(x_0)\\
    x(t,x_0)\approx x_f(t,x_0)&=
        \sum_{i=0}^{N_v-1}x_i(t)\theta_{v,i}(x_0).
\end{aligned}
\end{equation*}
We use
$\vect{v}$, $\vect{x}\in\Rbb^{N_v}$ and
$\vect{e}\in\Rbb^{N_e}$
to denote the FEM coefficient vectors of the corresponding
solution fields and
$\vect{w}=(\vect{v},\vect{e},\vect{x})^\top\in\Rbb^N$,
$N=2N_v+N_e$
to denote the full state vector.
The semidiscrete Lagrangian conservation laws can be derived by
testing equations \eqref{eq:euler} 
against the Lagrangian extensions of the FEM basis functions
on $\Omega(t)$
\begin{equation}\label{eq:fom-semi}
\begin{aligned}
    \text{momentum conservation}:& &\matr{M}_v\frac{d\vect{v}}{dt}&=
        -\vect{F}_v(\vect{w})\\
    \text{energy conservation}:& &\matr{M}_e\frac{d\vect{e}}{dt}&=
        \vect{F}_e(\vect{w},\vect{v})\\
    \text{equation of motion}:& &\frac{d\vect{x}}{dt}&=\vect{v}
\end{aligned}
\end{equation}
with mass matrices $\matr{M}_v\in\Rbb^{N_v\times N_v}$
and $\matr{M}_e\in\Rbb^{N_e\times N_e}$
\begin{equation}
    M_{v,ij}=\int_{\Omega(t)}\rho\theta_i^v\theta_j^vdx\qquad
    M_{e,ij}=\int_{\Omega(t)}\rho\theta_i^e\theta_j^edx
\end{equation}
and vector valued nonlinear force functions
$\vect{F}_v:\Rbb^N\to\Rbb^{N_v}$ and
$\vect{F}_e:\Rbb^N\times\Rbb^{N_v}\to\Rbb^{N_e}$
\begin{equation}
    F_{v,i}(\vect{w})=\int_{\Omega(t)}\sigma(w_f):\nabla\theta_i^vdx\qquad
    F_{e,i}(\vect{w},\vect{v}')=
        \int_{\Omega(t)}(\sigma(w_f):\nabla v')\,\theta_i^edx.
\end{equation}
Introducing the matrix valued force function
$\matr{F}:\Rbb^N\rightarrow\Rbb^{N_v\times N_e}$ 
with entries
\begin{equation}
    F_{ij}(\vect{w})=
        \int_{\Omega(t)}(\sigma(w_f):\nabla\theta_i^v)\,\theta_j^edx
\end{equation}
the nonlinear force functions can be compactly represented as 
\begin{equation}\label{eq:force-vec}
    \vect{F}_v(\vect{w})=\matr{F}(\vect{w})\,\vect{1}_\Ecal\qquad
    \vect{F}_e(\vect{w},\vect{v}')=\matr{F}(\vect{w})^\top\vect{v}'
\end{equation}
with $\vect{1}_\Ecal\in\mathbb{R}^{N_e}$ the coefficient vector of the
unity function in $\Ecal$.

In the above spatial integrations we write the integrals as
taking place over the control volume
$\Omega(t)$ instead of $\Omega_0=\Omega(0)$.
For any scalar valued field $a(t,x_0)$, $x_0\in\Omega_0$,
this is equivalent to using its Lagrangian extension
$a(t,x(t,x_0))$, $x(t,x_0)\in\Omega(t)$,
to perform the spatial integration; without misunderstanding,
we use the same symbol to denote the two field descriptions.
The two approaches are connected by performing the change of
variables $x(t,x_0)\mapsto x_0$
\begin{equation*}
    \int_{\Omega(t)}a(t,x(t,x_0))dx(t,x_0)=
        \int_{\Omega_0}a(t,x_0)\vert J(t,x_0)\vert dx_0
\end{equation*}
where $\vert J(t,x_0)\vert$
is the determinant of the deformation gradient defined
in the previous section.
To simplify the notation we use integrations over
$\Omega(t)$ throughout this work, implicitly understanding that
the above change of variables is carried out to perform
the integrations over $\Omega_0$.

\subsection{Temporal discretization}\label{sec:fom-time-discretize}
The time domain is discretized as
$\{t_k\}_{k=0}^{K-1}$
with $t_0=0$ and $t_{K-1}=t_f$.
All quantities at time $t_k$ are denoted by subscript $k$.
We use the two-stage average Runge-Kutta (RK2A) scheme derived
in \cite{Dobrev2012},
which has been shown to conserve the discrete total energy of the system.
Applying the RK2A scheme leads to the discrete system
of equations
\begin{equation}\label{eq:fom-rk2avg}
\begin{aligned}
    \vect{v}_{k+1/2}&=\vect{v}_k-(\Delta t_k/2)
        \matr{M}_v^{-1}\vect{F}_v(\vect{w}_k)&
    \vect{v}_{k+1}&=\vect{v}_k-\Delta t_k
        \matr{M}_v^{-1}\vect{F}_v(\vect{w}_{k+1/2})\\
    \vect{e}_{k+1/2}&=\vect{e}_k+(\Delta t_k/2)
        \matr{M}_e^{-1}\vect{F}_e(\vect{w}_k,\vect{v}_{k+1/2})&
    \vect{e}_{k+1}&=\vect{e}_k+\Delta t_k
        \matr{M}_e^{-1}\vect{F}_e(\vect{w}_k,\bar{\vect{v}}_{k+1/2})\\
    \vect{x}_{k+1/2}&=\vect{x}_k+
        (\Delta t_k/2)\vect{v}_{k+1/2}&
    \vect{x}_{k+1}&=\vect{x}_k+
        \Delta t_k\bar{\vect{v}}_{k+1/2}
\end{aligned}
\end{equation}
with
$\bar{\vect{v}}_{k+1/2}=(\vect{v}_k+\vect{v}_{k+1})/2$.
To ensure stability of the explicit timestepping scheme we also
employ the timestep control algorithm described in
\cite{Dobrev2012}.

\section{Reduced model}\label{sec:rom}
In this section we derive the projection based reduced
model for the semidiscrete Lagrangian conservation laws
\eqref{eq:fom-semi}. 
We introduce the reduced bases for velocity
$\{\phi_{v,i}\}_{i=0}^{n_v-1}\subset\Vcal$,
energy
$\{\phi_{e,i}\}_{i=0}^{n_e-1}\subset\Ecal$
and position
$\{\phi_{x,i}\}_{i=0}^{n_x-1}\subset\Vcal$
with reduced sizes
$n_v$, $n_x\ll N_v$ and $n_e\ll N_e$.
To approximate each solution field we restrict our solution space
to the linear span of each basis
\begin{equation}\label{eq:rom-sol}
\begin{aligned}
    v(t,x_0)\approx\tilde{v}(t,x_0)&=v_{os}(x_0)
        +\sum_{i=0}^{n_v-1}\hat{v}_i(t)\phi_{v,i}(x_0)\\
    e(t,x_0)\approx\tilde{e}(t,x_0)&=e_{os}(x_0)
        +\sum_{i=0}^{n_e-1}\hat{e}_i(t)\phi_{e,i}(x_0)\\
    x(t,x_0)\approx\tilde{x}(t,x_0)&=x_{os}(x_0)
        +\sum_{i=0}^{n_x-1}\hat{x}_i(t)\phi_{x,i}(x_0)
\end{aligned}
\end{equation}
with offset fields
$v_{os}$, $x_{os}\in\Vcal$ and $e_{os}\in\Ecal$.

The FEM coefficient vectors of the reduced basis functions are
represented by the orthonormal basis matrices
$\matr{\Phi}_v\in\Rbb^{N_v\times n_v}$,
$\matr{\Phi}_e\in\Rbb^{N_e\times n_e}$ and
$\matr{\Phi}_x\in\Rbb^{N_v\times n_x}$,
where each column holds the FEM coefficients of one basis function.
We compute the reduced basis matrices from simulation data of the
full model using the POD method \cite{Hinze2005}.
To decide how many POD basis functions to use to form each basis
matrix we employ the POD energy ratio
\begin{equation}\label{eq:POD-energy}
    \frac{\sum_{i=0}^{n_k-1}\sigma_i}{\sum_{i=0}^{N_{snap}-1}\sigma_i}
        \geq e_\sigma
\end{equation}
where $\sigma_i\geq 0$ denotes the $i$th largest singular value
of the employed data matrix consisting of $N_{snap}$
solution snapshots,
$e_\sigma\geq 0$ a chosen threshold
and $n_k$ the size of the corresponding reduced state
vector (velocity $v$, energy $e$ or position $x$).
Using the introduced reduced basis matrices we rewrite
\eqref{eq:rom-sol}
in terms of FEM coefficient vectors
\begin{equation}
\begin{aligned}
    \vect{v}(t)\approx\tilde{\vect{v}}(t)&=\vect{v}_{os}
        +\matr{\Phi}_v\hat{\vect{v}}(t)\\
    \vect{e}(t)\approx\tilde{\vect{e}}(t)&=\vect{e}_{os}
        +\matr{\Phi}_e\hat{\vect{e}}(t)\\
    \vect{x}(t)\approx\tilde{\vect{x}}(t)&=\vect{x}_{os}
        +\matr{\Phi}_x\hat{\vect{x}}(t) 
\end{aligned}
\end{equation}
where $\hat{\vect{v}}\in\Rbb^{n_v}$, 
$\hat{\vect{e}}\in\Rbb^{n_e}$ and
$\hat{\vect{x}}\in\Rbb^{n_x}$
denote the reduced solution state vectors.
We also use
$\tilde{\vect{w}}=(\tilde{\vect{v}},\tilde{\vect{e}},
\tilde{\vect{x}})^\top\in\Rbb^N$ 
to denote the FEM coefficient vector of
$\tilde{w}=(\tilde{v},\tilde{e},\tilde{x})$.

Making the above substitutions in \eqref{eq:fom-semi}
we reach the overdetermined semidiscrete system
\begin{equation}\label{eq:rom-semi}
\begin{aligned}
    \matr{M}_v\matr{\Phi}_v\frac{d\hat{\vect{v}}}{dt}&=
        -\vect{F}_v(\tilde{\vect{w}})\\
    \matr{M}_e\matr{\Phi}_e\frac{d\hat{\vect{e}}}{dt}&=
        \vect{F}_e(\tilde{\vect{w}},\tilde{\vect{v}})\\
    \matr{\Phi}_x\frac{d\hat{\vect{x}}}{dt}&=\tilde{\vect{v}}
\end{aligned}
\end{equation}
with the appropriate initial condition at $t=0$.
The above corresponds to using the reduced basis functions as
the trial functions and the FEM basis functions as the test
functions.
Next we present the derivation of two alternative reduced systems
of equations for the reduced state vector
$\hat{\vect{w}}=(\hat{\vect{v}},\hat{\vect{e}},\hat{\vect{x}})
\in\Rbb^n$,
$n=n_v+n_e+n_x$.
The reason for introducing two alternative systems is that both
systems will be employed by the hyper-reduction methods introduced
in the following sections.

Starting from \eqref{eq:rom-semi},
the first reduced system is derived by inverting the mass matrices
and transpose multiplying by the reduced basis matrices
\begin{equation}\label{eq:rom-beqp}
\begin{aligned}
    \frac{d\hat{\vect{v}}}{dt}&=
        -\matr{\Psi}_v^\top\vect{F}_v(\tilde{\vect{w}})=
        \vect{F}_v^\psi(\tilde{\vect{w}})\\
    \frac{d\hat{\vect{e}}}{dt}&=
        \matr{\Psi}_e^\top\vect{F}_e(\tilde{\vect{w}},\tilde{\vect{v}})=
        \vect{F}_e^\psi(\tilde{\vect{w}},\tilde{\vect{v}})\\
    \frac{d\hat{\vect{x}}}{dt}&=\matr{\Phi}_x^\top\tilde{\vect{v}}
\end{aligned}
\end{equation}
with
$\matr{\Psi}_v=\matr{M}_v^{-1}\matr{\Phi}_v\in\Rbb^{N_v\times n_v}$
and 
$\matr{\Psi}_e=\matr{M}_e^{-1}\matr{\Phi}_e\in\Rbb^{N_e\times n_e}$. 
This corresponds to using the functions represented by the columns of
$\matr{\Psi}_v$ and $\matr{\Psi}_e$
as the test functions.
More precisely, the components of force vectors
$\vect{F}_v^\psi\in\Rbb^{n_v}$ and
$\vect{F}_e^\psi\in\Rbb^{n_e}$
are given by the integrals
\begin{equation}\label{eq:rom-force-beqp}
    F_{v,i}^\psi(\tilde{\vect{w}})=
        \int_{\Omega(t)}\sigma(\tilde{w}):\nabla\psi^v_idx\qquad
    F_{e,i}^\psi(\tilde{\vect{w}},\tilde{\vect{v}}')=
        \int_{\Omega(t)}(\sigma(\tilde{w}):\nabla\tilde{v}')\,\psi^e_idx
\end{equation}
where functions $\psi^v_i\in\Vcal$ and $\psi^e_i\in\Ecal$
have the $i$th column of matrices
$\matr{\Psi}_v$ and $\matr{\Psi}_e$ respectively
as their FEM coefficient vectors.

The second reduced system is derived by directly transpose multiplying
by the reduced basis matrices in
\eqref{eq:rom-semi}
\begin{equation}\label{eq:rom-ceqp}
\begin{aligned}
    \hat{\matr{M}}_v\frac{d\hat{\vect{v}}}{dt}&=
        -\matr{\Phi}_v^{\top}\vect{F}_v(\tilde{\vect{w}})=
        \vect{F}_v^\phi(\tilde{\vect{w}})\\
    \hat{\matr{M}}_e\frac{d\hat{\vect{e}}}{dt}&=
        \matr{\Phi}_e^{\top}\vect{F}_e(\tilde{\vect{w}},\tilde{\vect{v}})=
        \vect{F}_e^\phi(\tilde{\vect{w}},\tilde{\vect{v}})\\
    \frac{d\hat{\vect{x}}}{dt}&=\matr{\Phi}_x^{\top}\tilde{\vect{v}}
\end{aligned}
\end{equation}
with reduced mass matrices
\begin{equation}\label{eq:rmass-mat}
    \hat{\matr{M}}_v=\matr{\Phi}_v^{\top}\matr{M}_v
        \matr{\Phi}_v\in\Rbb^{n_v\times n_v}\qquad
    \hat{\matr{M}}_e=\matr{\Phi}_e^{\top}\matr{M}_e
        \matr{\Phi}_e\in\Rbb^{n_e\times n_e}.
\end{equation}
The above corresponds to using the reduced basis functions as both trial
and test functions.
More specifically, the reduced mass matrices are given by the integrals
\begin{equation}
    \hat{M}_{v,ij}=\int_{\Omega(t)}\rho\phi^v_i\phi^v_jdx\qquad
    \hat{M}_{e,ij}=\int_{\Omega(t)}\rho\phi^e_i\phi^e_jdx
\end{equation}
and the force vectors
$\vect{F}_v^\phi\in\Rbb^{n_v}$ and
$\vect{F}_e^\phi\in\Rbb^{n_e}$
by
\begin{equation}\label{eq:rom-force-ceqp}
    F_{v,i}^\phi(\tilde{\vect{w}})=
        \int_{\Omega(t)}\sigma(\tilde{w}):\nabla\phi^v_idx\qquad
    F_{e,i}^\phi(\tilde{\vect{w}},\tilde{\vect{v}}')=
        \int_{\Omega(t)}(\sigma(\tilde{w}):\nabla\tilde{v}')\,\phi^e_idx.
\end{equation}

Although the size of the reduced systems
\eqref{eq:rom-beqp} and \eqref{eq:rom-ceqp}
has been successfully reduced to $n\ll N$,
the nonlinearity of the force functions in the right hand
side of the equations means that they still depend on the
full state vector
$\tilde{\vect{w}}\in\Rbb^N$.
As a result, evaluating the force functions requires
lifting the reduced state
$\hat{\vect{w}}\in\Rbb^n$ to $\tilde{\vect{w}}$
at every timestep of the integration.
The cost of this lifting operation dominates the computational
cost of the simulation and prevents its acceleration.
To address this issue, a hyper-reduction method
is employed to derive a hyper-reduced model that can be
simulated without requiring lifting to the full state vector
$\tilde{\vect{w}}$.

In this work we employ the EQP hyper-reduction method,
which uses reduced quadrature rules to evaluate the nonlinear
force functions with reduced cost
\cite{Patera2017,Yano2019}.
Quadrature methods such as EQP are naturally well suited to
FEM spatial discretization procedures, such as the one employed
in this work.
In that respect, the present work can be viewed as a continuation
of \cite{Copeland2022,Cheung2023},
where interpolation methods were used to derive
hyper-reduced Lagrangian hydrodynamics models.

\subsection{Empirical quadrature procedure}\label{sec:eqp}
In this section we describe the EQP hyper-reduction method
using the model equations \eqref{eq:rom-beqp}.
The computation of the force vectors
$\vect{F}_v^\psi$ and $\vect{F}_e^\psi$
of \eqref{eq:rom-beqp}
requires the spatial integrations
\begin{equation}
    \vect{F}_v^\psi(\tilde{\vect{w}})=
        \int_{\Omega(t)}\vect{g}_v^\psi(x,\tilde{w})dx\qquad
    \vect{F}_e^\psi(\tilde{\vect{w}},\tilde{\vect{v}})=
        \int_{\Omega(t)}\vect{g}_e^\psi(x,\tilde{w},\tilde{v})dx
\end{equation}
with the integrands
$\vect{g}_v^\psi$ and $\vect{g}_e^\psi$
taking values in $\Rbb^{n_v}$ and $\Rbb^{n_e}$
respectively, with components
\begin{equation}\label{eq:rom-beqp-force-integrands}
    g_{v,i}^\psi(x,\tilde{w})=\sigma(\tilde{w})(x):\nabla\psi^v_i(x)\qquad
    g_{e,i}^\psi(x,\tilde{w},\tilde{v})=
        (\sigma(\tilde{w})(x):\nabla \tilde{v}(x))\,\psi^e_i(x). 
\end{equation}
These integrations can be approximated using numerical quadrature rules
employing the underlying FEM basis functions
\cite{Dobrev2012,Brenner2008}.
Let
$\{x^v_j,\rho^v_j\}_{j=0}^{J_v-1}$ and
$\{x^e_j,\rho^e_j\}_{j=0}^{J_e-1}$
denote the full quadrature rules 
($x_j$ denoting points, $\rho_j\geq 0$ weights) for velocity
and energy, with $J_v$ and $J_e$ their respective sizes.
We can approximate the integrations by
\begin{equation}\label{eq:full-quad-beqp}
    \vect{F}_v^\psi(\tilde{\vect{w}})\approx
        \sum_{j=0}^{J_v-1}\rho^v_j\vect{g}_v^\psi(x^v_j,\tilde{w})\qquad
    \vect{F}_e^\psi(\tilde{\vect{w}},\tilde{\vect{v}})\approx
        \sum_{j=0}^{J_e-1}\rho^e_j\vect{g}_e^\psi(x^e_j,\tilde{w},\tilde{v}).
\end{equation}
The computational cost of the above summations is proportional to
the sizes of the full quadrature rules ($J_v$ and $J_e$),
since the integrands must be evaluated at every point of
the corresponding quadrature rule.
In addition, since the integrands depend on the state variables
they must be updated at every stage of the time integration.

To remove the dependence of the computational cost on the full sizes
$J_v$ and $J_e$ we employ the EQP hyper-reduction method
\cite{Patera2017,Yano2019}.
EQP is used to generate quadrature rules of reduced size that can
approximate the evaluation of the force functions with reduced
computational cost while maintaining a reasonable level
of accuracy.
More specifically, EQP seeks to construct new quadrature rules 
$\{x^v_j,\tilde{\rho}^v_j\}_{j=0}^{J_v-1}$ and
$\{x^e_j,\tilde{\rho}^e_j\}_{j=0}^{J_e-1}$
where only $\hat{J}_v\ll J_v$ and $\hat{J}_e\ll J_e$
of the new weights $\tilde{\rho}^v_j$ and $\tilde{\rho}^e_j$
are nonzero.
In this way, computing the sums in \eqref{eq:full-quad-beqp}
requires the evaluation of the integrands at only
$\hat{J}_v$ and $\hat{J}_e$
sampled quadrature points respectively, which are the points
corresponding to nonzero weights.

Note that EQP does not attempt to identify new quadrature points;
rather, it samples a subset of the original quadrature points with
appropriately modified weights.
In that sense EQP can be thought of as a quadrature analog of
interpolation hyper-reduction methods based on gappy POD
\cite{Larsson2026}.
The problem of constructing the reduced quadrature rules is posed as a
linear optimization problem
\cite{Patera2017}.
To outline the employed procedure we use the construction of
a reduced quadrature rule for the approximation of
$\vect{F}_v^\psi$
as an example.
The construction of a reduced rule for
$\vect{F}_e^\psi$
can be performed analogously.

Starting with the full quadrature rule
$\{x^v_j,\rho^v_j\}_{j=0}^{J_v-1}$
we want to find weights
$\{\tilde{\rho}^v_j\}_{j=0}^{J_v-1}$
that minimize the 1-norm
$\sum_{j=0}^{J_v-1}\tilde{\rho}^v_j$
subject to the nonnegativity constraints
$\tilde{\rho}^v_j\geq 0$, $j\in\{0,\ldots,J_v-1\}$,
and a total of $N_c^v\ll J_v$ accuracy constraints
\begin{equation*}
    \Bigl |\sum_{j=0}^{J_v-1}\rho^v_jg_{v,i}^\psi(x^v_j,\tilde{w}(t_k))
        -\sum_{j=0}^{J_v-1}\tilde{\rho}^v_j
        g_{v,i}^\psi(x^v_j,\tilde{w}(t_k))\Bigr |\leq\epsilon_s
\end{equation*}
where $0\leq k<N_t$ indexes the time snapshots
$\tilde{w}(t_k)$ used to form the constraints,
$0\leq i<n_v$ indexes the components of vector
$\vect{g}_v^\psi$,
and $0\leq s<N_c^v$ is the overall index of the accuracy
constraints with associated error thresholds $\epsilon_s>0$.
This implies a total of $N_c^v=n_vN_t$ constraints,
corresponding to $n_v$ constraints
(one for each component of $\vect{g}_v^\psi$)
per included snapshot of the state variables $\tilde{w}(t_k)$.
In addition to selecting the snapshots $\tilde{w}(t_k)$
that will be used to form the accuracy constraints,
selecting appropriate values for the error thresholds
$\epsilon_s$ is also important in constructing an effective
reduced quadrature rule.
In practice one seeks to strike a balance between
\emph{sparsity} (low number of nonzero weights, $\hat{J}_v$)
and \emph{accuracy} (low error thresholds $\epsilon_s$).

The linear optimization problem of forming the reduced quadrature
rule can be recast as an equivalent nonnegative least squares (NNLS)
problem \cite{Du2022,Sleeman2022}.
Details on the steps we take to form the NNLS problem
and solve it are given in \ref{app:implementation}.

\subsection{Basic EQP hyper-reduction}\label{sec:basic-eqp}
In this section we employ the EQP method as outlined above
to derive a hyper-reduced version of model
\eqref{eq:rom-beqp}.
In particular, we use the reduced quadrature rules
$\{x^v_j,\tilde{\rho}^v_j\}_{j=0}^{J_v-1}$ and
$\{x^e_j,\tilde{\rho}^e_j\}_{j=0}^{J_e-1}$
to define the respective reduced force functions
$\hat{\vect{F}}_v^\psi:\Rbb^n\to\Rbb^{n_v}$ and
$\hat{\vect{F}}_e^\psi:\Rbb^n\times\Rbb^{n_v}\to\Rbb^{n_e}$
\begin{equation}\label{eq:beqp-rforce}
    \hat{\vect{F}}_v^\psi(\hat{\vect{w}})=
        \sum_{j=0}^{J_v-1}\tilde{\rho}^v_j
        \vect{g}_v^\psi(x^v_j,\tilde{w})\qquad
    \hat{\vect{F}}_e^\psi(\hat{\vect{w}},\hat{\vect{v}})=
        \sum_{j=0}^{J_e-1}\tilde{\rho}^e_j
        \vect{g}_e^\psi(x^e_j,\tilde{w},\tilde{v})
\end{equation}
and semidiscrete hyper-reduced model
\begin{equation}\label{eq:hrom-beqp-semi}
\begin{aligned}
    \frac{d\hat{\vect{v}}}{dt}&=
        -\hat{\vect{F}}_v^\psi(\hat{\vect{w}})\\
    \frac{d\hat{\vect{e}}}{dt}&=
        \hat{\vect{F}}_e^\psi(\hat{\vect{w}},\hat{\vect{v}})\\
    \frac{d\hat{\vect{x}}}{dt}&=
        \matr{\Phi}_x^\top(\vect{v}_{os}+\matr{\Phi}_v\hat{\vect{v}}).
\end{aligned}
\end{equation}
The evaluation of the reduced force functions requires only
$\hat{J}_v\ll J_v$ and $\hat{J}_e\ll J_e$
evaluations of the respective integrands.
As a result, it requires knowledge only of the degrees of freedom
of $\tilde{w}$ sampled by the reduced quadrature rules.
To make this reduction in computational cost explicit in our notation,
the reduced force functions 
$\hat{\vect{F}}_v^\psi$ and $\hat{\vect{F}}_e^\psi$
are written as functions of the reduced state $\hat{\vect{w}}$,
instead of the lifted state $\tilde{\vect{w}}$.
For the evolution of $\hat{\vect{x}}$ in
\eqref{eq:hrom-beqp-semi},
the quantities $\matr{\Phi}_x^\top\vect{v}_{os}\in\Rbb^{n_x}$
and $\matr{\Phi}_x^\top\matr{\Phi}_v\in\Rbb^{n_x\times n_v}$
can be precomputed, leading to an evolution equation that depends
on reduced sizes only.

\subsection{Temporal discretization}\label{sec:rom-time-discretize}
Applying the RK2A scheme to the hyper-reduced system
\eqref{eq:hrom-beqp-semi},
we arrive at the discrete hyper-reduced system
\begin{equation}\label{eq:hrom-beqp-rk2avg}
\begin{aligned}
    \hat{\vect{v}}_{k+1/2}&=\hat{\vect{v}}_k-(\Delta t_k/2)
        \hat{\vect{F}}_v^\psi(\hat{\vect{w}}_k)&
    \hat{\vect{v}}_{k+1}&=\hat{\vect{v}}_k-\Delta t_k
        \hat{\vect{F}}_v^\psi(\hat{\vect{w}}_{k+1/2})\\
    \hat{\vect{e}}_{k+1/2}&=\hat{\vect{e}}_k+(\Delta t_k/2)
        \hat{\vect{F}}_e^\psi(\hat{\vect{w}}_k,\hat{\vect{v}}_{k+1/2})&
    \hat{\vect{e}}_{k+1}&=\hat{\vect{e}}_k+\Delta t_k
        \hat{\vect{F}}_e^\psi(\hat{\vect{w}}_{k+1/2},
        \bar{\hat{\vect{v}}}_{k+1/2})\\
    \hat{\vect{x}}_{k+1/2}&=\hat{\vect{x}}_k+(\Delta t_k/2)
        \matr{\Phi}_x^\top (\vect{v}_{os}
        +\matr{\Phi}_v\hat{\vect{v}}_{k+1/2})& 
    \hat{\vect{x}}_{k+1}&=\hat{\vect{x}}_k+\Delta t_k
        \matr{\Phi}_x^\top (\vect{v}_{os}
        +\matr{\Phi}_v\bar{\hat{\vect{v}}}_{k+1/2})
\end{aligned}
\end{equation}
with
$\bar{\hat{\vect{v}}}_{k+1/2}=(\hat{\vect{v}}_k
+\hat{\vect{v}}_{k+1})/2$.
In the above, the timestep control algorithm is enforced based
on the lifted state vector $\tilde{\vect{w}}$,
using again only the degrees of freedom sampled by the reduced
quadrature rules.
In general, this leads to timesteps that are different from those
used in the full order model, where the timestep control algorithm is
enforced using the full state vector $\vect{w}$. 

\subsection{Time windowing}
For unsteady problems dominated by advection, like the Euler equations
considered in the present work, the dimension
of the linear subspaces required to capture the relevant flow features
generally grows with the simulation time.
This is both because new dynamical behavior can emerge as the final
simulation time is increased, and because dynamically similar flow
features can appear in different regions of the spatial domain.
As a result, attempting to build a single reduced model for the whole
simulation time horizon may require a large reduced basis dimension to
achieve the desired degree of accuracy.
In turn, this can potentially eliminate any acceleration of the
simulation offered by the employed reduction method.

To address this issue we use the method of time windowing
\cite{Parish2021,Shimizu2021,Copeland2022,Cheung2023}.
With time windowing, we divide the considered simulation time interval
$[t_0,t_f]$ into $N_w$ subintervals (windows)
$[t_i, t_{i+1}]$, $0\leq i<N_w$,
and build a reduced model for each time window.
More specifically, for each window we collect the solution snapshots
that correspond to the prescribed time interval and use them to build
reduced basis matrices and reduced quadrature rules that are local to
the chosen time interval.
This allows us to build reduced bases and quadrature rules that are of
modest size and offer high accuracy in approximating the full solution
for each time subinterval.

\subsection{Simulation stages}
The overall simulation process consists of two main stages:
the offline and online stages.
In the offline stage we perform numerical simulations using the
full model \eqref{eq:fom-rk2avg}
and collect the desired solution snapshots.
Using those snapshots, we derive the reduced basis matrices
and reduced quadrature rules.
When multiple time windows are used, this process is carried out
for each time window independently as described earlier.
These steps complete the construction of the hyper-reduced model.

In the online stage we use the derived hyper-reduced model
to perform numerical simulations that depend on reduced sizes only,
thereby offering a reduction in computational cost.
In the present work we focus on reproductive simulations,
where the derived reduced model is used to reproduce
simulation data that has been used for its training.
This is in contrast to predictive simulations, where the reduced
model is tested in predicting previously unseen simulation data.

\section{Energy conservative EQP}\label{sec:ceqp}
In this section we develop a modified version of
the basic EQP method that enforces the additional constraint of
total energy conservation in the discrete hyper-reduced model.
To do so, we use the semidiscrete system \eqref{eq:rom-ceqp}.
Our goal is to derive reduced force functions
$\hat{\vect{F}}_v^\phi:\Rbb^n\to\Rbb^{n_v}$ and
$\hat{\vect{F}}_e^\phi:\Rbb^n\times\Rbb^{n_v}\to\Rbb^{n_e}$
to approximate the force function evaluations
\begin{equation*}
    \hat{\vect{F}}_v^\phi(\hat{\vect{w}})\approx
        \matr{\Phi}_v^\top\vect{F}_v(\tilde{\vect{w}})\qquad
    \hat{\vect{F}}_e^\phi(\hat{\vect{w}},\hat{\vect{v}})\approx
        \matr{\Phi}_e^\top\vect{F}_e(\tilde{\vect{w}},\tilde{\vect{v}})
\end{equation*}
with reduced computational cost.
Using the reduced force functions, we can then derive the
semidiscrete hyper-reduced model
\begin{equation}\label{eq:hrom-ceqp-semi}
\begin{aligned}
    \hat{\matr{M}}_v\frac{d\hat{\vect{v}}}{dt}&=
        -\hat{\vect{F}}_v^\phi(\hat{\vect{w}})\\
    \hat{\matr{M}}_e\frac{d\hat{\vect{e}}}{dt}&=
        \hat{\vect{F}}_e^\phi(\hat{\vect{w}},\hat{\vect{v}})\\
    \frac{d\hat{\vect{x}}}{dt}&=
        \matr{\Phi}_x^\top(\vect{v}_{os}+\matr{\Phi}_v\hat{\vect{v}})
\end{aligned}
\end{equation}
and apply the RK2A scheme to reach the discrete hyper-reduced system
\begin{equation}\label{eq:hrom-ceqp-rk2avg}
\begin{aligned}
    \hat{\vect{v}}_{k+1/2}&=\hat{\vect{v}}_k-(\Delta t_k/2)
        \hat{\matr{M}}_v^{-1}\hat{\vect{F}}_v^\phi(\hat{\vect{w}}_k)&
    \hat{\vect{v}}_{k+1}&=\hat{\vect{v}}_k-\Delta t_k
        \hat{\matr{M}}_v^{-1}\hat{\vect{F}}_v^\phi(\hat{\vect{w}}_{k+1/2})\\
    \hat{\vect{e}}_{k+1/2}&=\hat{\vect{e}}_k+(\Delta t_k/2)
        \hat{\matr{M}}_e^{-1}\hat{\vect{F}}_e^\phi
        (\hat{\vect{w}}_k,\hat{\vect{v}}_{k+1/2})&
    \hat{\vect{e}}_{k+1}&=\hat{\vect{e}}_k+\Delta t_k \hat{\matr{M}}_e^{-1}
        \hat{\vect{F}}_e^\phi(\hat{\vect{w}}_{k+1/2},
        \bar{\hat{\vect{v}}}_{k+1/2})\\
    \hat{\vect{x}}_{k+1/2}&=\hat{\vect{x}}_k+(\Delta t_k/2)
        \matr{\Phi}_x^\top (\vect{v}_{os}
        +\matr{\Phi}_v\hat{\vect{v}}_{k+1/2})&
    \hat{\vect{x}}_{k+1}&=\hat{\vect{x}}_k+\Delta t_k
        \matr{\Phi}_x^\top \vect{v}_{os}
        +\matr{\Phi}_v\bar{\hat{\vect{v}}}_{k+1/2}.
\end{aligned}
\end{equation}

In the following sections we define the reduced force functions
$\hat{\vect{F}}_v^\phi$ and $\hat{\vect{F}}_e^\phi$. 
We begin by identifying sufficient conditions for total energy
conservation in the strong sense.
Then we describe the implementation of the resulting hyper-reduction
scheme used to incorporate these conditions.

\subsection{Energy conservation}
The total energy of the system is the sum of its internal and
kinetic energy
\begin{equation*}
    TE(w)=IE(w)+KE(w)=
        \int_{\Omega(t)}\rho edx+
        \dfrac{1}{2}\int_{\Omega(t)}\rho\vert v\vert^2dx   
\end{equation*}
which can be represented using the FEM coefficient vectors
\begin{equation*}
    TE(\vect{w})=IE(\vect{w})+KE(\vect{w})=
        \vect{1}_\Ecal^\top\matr{M}_e\vect{e}+
        \dfrac{1}{2}\vect{v}^\top\matr{M}_v\vect{v}. 
\end{equation*}
The semidiscrete full model \eqref{eq:fom-semi}
conserves the total energy by design:
$\frac{d}{dt}TE(\vect{w}(t))=0$
for all $t\geq 0$.
In addition, the RK2A discrete scheme \eqref{eq:fom-rk2avg}
conserves the discrete total energy:
$TE(\vect{w}_{k+1})=TE(\vect{w}_k)$
for all $k\geq 0$ \cite{Dobrev2012}.
In the following theorem we derive sufficient conditions that
ensure that the conservation of discrete total energy is preserved
in the discrete hyper-reduced model
\eqref{eq:hrom-ceqp-rk2avg}.

\begin{theorem}\label{thm:energy-conserve}
Assume that the following three conditions are satisfied.
\begin{enumerate}
\item $\vect{v}_{os}=\vect{0}_\Vcal$
\item $\vect{1}_\Ecal=\matr{\Phi}_e\hat{\vect{1}}_e$
for some $\hat{\vect{1}}_e\in\Rbb^{n_e}$
\item $\hat{\vect{v}}^\top\hat{\vect{F}}_v^\phi(\hat{\vect{w}})=
\hat{\vect{1}}_e^\top\hat{\vect{F}}_e^\phi(\hat{\vect{w}},\hat{\vect{v}})$
for all $\hat{\vect{w}}\in\Rbb^{n}$.
\end{enumerate}
Then the discrete hyper-reduced model \eqref{eq:hrom-ceqp-rk2avg} 
conserves the discrete total energy; namely, 
\begin{equation*}
IE(\tilde{\vect{w}}_{k+1})+KE(\tilde{\vect{w}}_{k+1})=
    IE(\tilde{\vect{w}}_k)+KE(\tilde{\vect{w}}_k)
\end{equation*}
for all $k\geq 0$.
\end{theorem}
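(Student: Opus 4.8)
The plan is to establish conservation one RK2A step at a time: I fix $k$ and show that the total energy increment $\bigl(IE(\tilde{\vect{w}}_{k+1})+KE(\tilde{\vect{w}}_{k+1})\bigr)-\bigl(IE(\tilde{\vect{w}}_k)+KE(\tilde{\vect{w}}_k)\bigr)$ vanishes, after which the stated identity holds for every $k$. The first step is to rewrite both energies in reduced coordinates. Condition~1 removes the velocity offset, so $\tilde{\vect{v}}=\matr{\Phi}_v\hat{\vect{v}}$ and hence $KE(\tilde{\vect{w}})=\tfrac{1}{2}\hat{\vect{v}}^\top\hat{\matr{M}}_v\hat{\vect{v}}$ by the definition of $\hat{\matr{M}}_v$; this is precisely the hypothesis needed to avoid a surviving cross term $\vect{v}_{os}^\top\matr{M}_v\matr{\Phi}_v\hat{\vect{v}}$. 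For the internal energy the constant offset cancels in the increment, and Condition~2 gives $\vect{1}_\Ecal^\top\matr{M}_e\matr{\Phi}_e=\hat{\vect{1}}_e^\top\hat{\matr{M}}_e$, so that $IE(\tilde{\vect{w}}_{k+1})-IE(\tilde{\vect{w}}_k)=\hat{\vect{1}}_e^\top\hat{\matr{M}}_e(\hat{\vect{e}}_{k+1}-\hat{\vect{e}}_k)$.

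Next I would substitute the RK2A updates from \eqref{eq:hrom-ceqp-rk2avg}. For the kinetic term I use the symmetry of $\hat{\matr{M}}_v$ and the telescoping identity $\hat{\vect{v}}_{k+1}^\top\hat{\matr{M}}_v\hat{\vect{v}}_{k+1}-\hat{\vect{v}}_k^\top\hat{\matr{M}}_v\hat{\vect{v}}_k=(\hat{\vect{v}}_{k+1}-\hat{\vect{v}}_k)^\top\hat{\matr{M}}_v(\hat{\vect{v}}_{k+1}+\hat{\vect{v}}_k)$, noting $\hat{\vect{v}}_{k+1}+\hat{\vect{v}}_k=2\bar{\hat{\vect{v}}}_{k+1/2}$, and then insert the full-step velocity update $\hat{\matr{M}}_v(\hat{\vect{v}}_{k+1}-\hat{\vect{v}}_k)=-\Delta t_k\hat{\vect{F}}_v^\phi(\hat{\vect{w}}_{k+1/2})$ to obtain $KE(\tilde{\vect{w}}_{k+1})-KE(\tilde{\vect{w}}_k)=-\Delta t_k\,\bar{\hat{\vect{v}}}_{k+1/2}^\top\hat{\vect{F}}_v^\phi(\hat{\vect{w}}_{k+1/2})$. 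For the internal term the full-step energy update gives $IE(\tilde{\vect{w}}_{k+1})-IE(\tilde{\vect{w}}_k)=\Delta t_k\,\hat{\vect{1}}_e^\top\hat{\vect{F}}_e^\phi(\hat{\vect{w}}_{k+1/2},\bar{\hat{\vect{v}}}_{k+1/2})$. Adding the two increments reduces the claim to showing that the bracket $\hat{\vect{1}}_e^\top\hat{\vect{F}}_e^\phi(\hat{\vect{w}}_{k+1/2},\bar{\hat{\vect{v}}}_{k+1/2})-\bar{\hat{\vect{v}}}_{k+1/2}^\top\hat{\vect{F}}_v^\phi(\hat{\vect{w}}_{k+1/2})$ is zero.

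This bracket is almost exactly the work-balance of Condition~3: both force evaluations already share the single stress state $\hat{\vect{w}}_{k+1/2}$, which is the crucial feature that makes the two full-step forces compatible. The one discrepancy---and the step I expect to be the main obstacle---is that the velocity contracted against the forces is the average $\bar{\hat{\vect{v}}}_{k+1/2}$, whereas Condition~3 as written pairs the state $\hat{\vect{w}}$ with its own velocity component $\hat{\vect{v}}$. Closing this gap requires extending Condition~3 from the on-diagonal case (test velocity equal to the state velocity) to the off-diagonal combination (state $\hat{\vect{w}}_{k+1/2}$, test velocity $\bar{\hat{\vect{v}}}_{k+1/2}$). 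I would do this using the structure of the CEQP forces: since $\hat{\vect{F}}_e^\phi(\hat{\vect{w}},\cdot)$ is linear in its test-velocity argument and both reduced forces are assembled from a single shared reduced stress operator, the balance $\hat{\vect{u}}^\top\hat{\vect{F}}_v^\phi(\hat{\vect{w}})=\hat{\vect{1}}_e^\top\hat{\vect{F}}_e^\phi(\hat{\vect{w}},\hat{\vect{u}})$ holds for the fixed state $\hat{\vect{w}}=\hat{\vect{w}}_{k+1/2}$ and an arbitrary test velocity $\hat{\vect{u}}$, of which Condition~3 is the special case $\hat{\vect{u}}=\hat{\vect{v}}$. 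Taking $\hat{\vect{u}}=\bar{\hat{\vect{v}}}_{k+1/2}$ then annihilates the bracket and completes the per-step argument.
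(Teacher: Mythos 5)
Your proposal is correct and follows essentially the same route as the paper's proof: the same chain of identities (condition 2 to reduce the internal-energy increment to $\hat{\vect{1}}_e^\top\hat{\matr{M}}_e(\hat{\vect{e}}_{k+1}-\hat{\vect{e}}_k)$, the full RK2A step, condition 3 to trade the energy force for the velocity force, the telescoping identity $2\bar{\hat{\vect{v}}}_{k+1/2}^\top\hat{\matr{M}}_v(\hat{\vect{v}}_{k+1}-\hat{\vect{v}}_k)=\hat{\vect{v}}_{k+1}^\top\hat{\matr{M}}_v\hat{\vect{v}}_{k+1}-\hat{\vect{v}}_k^\top\hat{\matr{M}}_v\hat{\vect{v}}_k$, and condition 1 to lift back to $KE$). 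The one point you flag as the main obstacle---applying condition 3 with state $\hat{\vect{w}}_{k+1/2}$ but test velocity $\bar{\hat{\vect{v}}}_{k+1/2}$---is exactly what the paper's proof does silently; reading condition 3 as holding for an arbitrary test velocity (which the CEQP construction via the shared reduced quadrature rule indeed guarantees) is the intended interpretation, so your extra care there is warranted rather than a deviation.
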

\begin{proof}
It holds true that
\begin{align*}
    IE(\tilde{\vect{w}}_{k+1})-IE(\tilde{\vect{w}}_k)&=
        \vect{1}_\Ecal^\top\matr{M}_e
        (\tilde{\vect{e}}_{k+1}-\tilde{\vect{e}}_k)\\
    &=\hat{\vect{1}}_e^\top\hat{\matr{M}}_e
        (\hat{\vect{e}}_{k+1}-\hat{\vect{e}}_k)\\
    &=\Delta t_k\hat{\vect{1}}_e^\top\hat{\vect{F}}_e^\phi
        (\hat{\vect{w}}_{k+1/2},\bar{\hat{\vect{v}}}_{k+1/2})\\
    &=\Delta t_k\bar{\hat{\vect{v}}}_{k+1/2}^\top
        \hat{\vect{F}}_v^\phi(\hat{\vect{w}}_{k+1/2})\\
    &=-\bar{\hat{\vect{v}}}_{k+1/2}^\top\hat{\matr{M}}_v
        (\hat{\vect{v}}_{k+1}-\hat{\vect{v}}_k)\\
    &=-\frac{1}{2}(\hat{\vect{v}}_{k+1}^\top\hat{\matr{M}}_v
        \hat{\vect{v}}_{k+1}-\hat{\vect{v}}_k^\top\hat{\matr{M}}_v
        \hat{\vect{v}}_k)\\
    &=-\frac{1}{2}(\tilde{\vect{v}}_{k+1}^\top\matr{M}_v
        \tilde{\vect{v}}_{k+1}-\tilde{\vect{v}}_k^\top
        \matr{M}_v\tilde{\vect{v}}_k)\\
    &=-\bigl(KE(\tilde{\vect{w}}_{k+1})-KE(\tilde{\vect{w}}_k)\bigr).
\end{align*}
\end{proof}

The three conditions presented in Theorem \ref{thm:energy-conserve}
are motivated by the physics and structure of the hyper-reduced model.
The first two conditions ensure that the kinetic energy $KE$
and internal energy $IE$ respectively can be represented in terms
of the reduced state vectors.
The third condition is the main structure preserving property coupling
the velocity and energy forcing terms, motivated by the fact that the
fluid flow governed by the discrete hyper-reduced model
\eqref{eq:hrom-ceqp-rk2avg} 
is driven by pressure-volume work.

\subsection{Implementation}\label{sec:ceqp-implementation}
To implement the energy conservative EQP method we begin with
the semidiscrete model \eqref{eq:rom-ceqp}.
We use zero offset vectors for position, velocity and energy
in \eqref{eq:rom-sol}, which ensures we satisfy condition 1
of Theorem \ref{thm:energy-conserve}.
To meet condition 2 we enrich the energy basis matrix
$\matr{\Phi}_e$ by adding the energy unit
$\vect{1}_\Ecal$ as a column vector.
Moreover, we orthonormalize the basis matrices
$\matr{\Phi}_v$ and $\matr{\Phi}_e$
such that the reduced mass matrices
\eqref{eq:rmass-mat}
reduce to identity matrices.

To satisfy condition 3 we are going to derive one combined reduced
quadrature rule to be used for both the velocity $\vect{F}_v^\phi$
and energy $\vect{F}_e^\phi$ force functions.
More specifically, we are going to form a reduced quadrature rule
for the force function
$\vect{F}^\phi:\Rbb^N\to\Rbb^{n_v\times n_e}$
\begin{equation}\label{eq:ec-rquad}
    \vect{F}^\phi(\tilde{\vect{w}})=\matr{\Phi}_v^\top
        \vect{F}(\tilde{\vect{w}})\matr{\Phi}_e=
        \int_{\Omega(t)}\matr{G}(x,\tilde{w})dx
\end{equation}
where $\matr{G}$ denotes the integrand taking values in
$\Rbb^{n_v\times n_e}$, with entries
\begin{equation}\label{eq:ceqp-force-integrand}
    G_{ij}(x,\tilde{w})=
        (\sigma(\tilde{w})(x):\nabla\phi^v_i(x))\,\phi^e_j(x).
\end{equation}
With respect to $\vect{F}^\phi$, the velocity and energy force
functions are given by
\begin{equation*}
    \vect{F}_v^\phi(\tilde{\vect{w}})=
        \vect{F}^\phi(\tilde{\vect{w}})\hat{\vect{1}}_e\qquad
    \vect{F}_e^\phi(\tilde{\vect{w}},\tilde{\vect{v}})
        =\vect{F}^\phi(\tilde{\vect{w}})^\top\hat{\vect{v}}.
\end{equation*}

We denote by
$\{x_j,\rho_j\}_{j=0}^{J-1}$
the full quadrature rule that can be used to approximate
the integration \eqref{eq:ec-rquad}
\begin{equation*}
    \vect{F}^\phi(\tilde{\vect{w}})\approx
        \sum_{j=0}^{J-1}\rho_j\matr{G}(x_j,\tilde{w}).
\end{equation*}
Starting with this rule, we employ the EQP method
to derive a reduced quadrature rule
$\{x_j,\tilde{\rho}_j\}_{j=0}^{J-1}$
with $\hat{J}\ll J$ nonzero weights, and use it to define
the reduced force function
$\hat{\vect{F}}^\phi:\Rbb^n\to\Rbb^{n_v\times n_e}$
\begin{equation}\label{eq:ceqp-quad}
    \hat{\vect{F}}^\phi(\hat{\vect{w}})=
        \sum_{j=0}^{J-1}\tilde{\rho}_j\matr{G}(x_j,\tilde{w}).
\end{equation}
The evaluation of
$\vect{F}_v^\phi$ and $\vect{F}_e^\phi$
is then approximated with reduced computational cost by
\begin{equation}\label{eq:hrforce-vec2}
    \hat{\vect{F}}_v^\phi(\hat{\vect{w}})=
        \hat{\vect{F}}^\phi(\hat{\vect{w}})\hat{\vect{1}}_e
        \approx\vect{F}_v^\phi(\tilde{\vect{w}})\qquad
    \hat{\vect{F}}_e^\phi(\hat{\vect{w}},\hat{\vect{v}})=
        \hat{\vect{F}}^\phi(\hat{\vect{w}})^\top\hat{\vect{v}}
        \approx\vect{F}_e^\phi(\tilde{\vect{w}},\tilde{\vect{v}}).
\end{equation}
It follows that for all $\hat{\vect{w}}\in\Rbb^n$
\begin{equation*}
    \hat{\vect{v}}^\top\hat{\vect{F}}_v^\phi(\hat{\vect{w}})=
        \hat{\vect{v}}^\top\hat{\vect{F}}^\phi(\hat{\vect{w}})
        \hat{\vect{1}}_e=
        \hat{\vect{1}}_e^\top\hat{\vect{F}}^\phi(\hat{\vect{w}})^\top
        \hat{\vect{v}}=
        \hat{\vect{1}}_e^\top\hat{\vect{F}}_e^\phi(\hat{\vect{w}},
        \hat{\vect{v}})
\end{equation*}
satisfying condition 3 of Theorem \ref{thm:energy-conserve}. 
Therefore, we conclude that the discrete hyper-reduced model
\eqref{eq:hrom-ceqp-rk2avg}
conserves the discrete total energy for all
time indices $k\geq 0$.
Additional details on the implementation of the energy
conservative EQP method are given in \ref{app:implementation}.

\section{Numerical experiments}\label{sec:experiments}
In this section we present numerical results for four problem cases
used to compare the performance of the basic and energy
conservative EQP methods.
Using each of the two EQP methods we simulate the following problems:
the 3D Sedov blast problem,
the 2D Gresho vortex problem,
the 3D triple point problem, and
the 3D Taylor-Green vortex problem.
They are all problems that have been used in the past as benchmarks
for testing the performance of different discretization and reduction
methods \cite{Dobrev2012,Copeland2022,Larsson2026}.

\subsection{Problems}
In the following paragraphs we provide an overview of each problem
case and its simulation parameters.
The spatial domain in each problem is discretized using an
initially uniform, cartesian mesh of hexahedral elements.
Table \ref{tab:discretize-params}
presents discretization parameters used for each of the
simulated problems, including the number of basis functions used for
each finite element space and the corresponding polynomial
basis orders.

\begin{table}[t]
    \centering
	\begin{tabular}{l l l l l}
	Run & R1 & R2 & R3 & R4\\
	\hline
	Problem & Sedov & Gresho & Triple point & Taylor-Green\\
	$m$ & 2 & 4 & 2 & 2\\
    $k$ & 2 & 2 & 2 & 2\\
    $N_v$ & 14739 & 18818 & 38475 & 14739\\
    $N_e$ & 4096 & 9216 & 10752 & 4096\\
    \hline
	\end{tabular}
	\caption{Spatial discretization parameters
    for each simulated problem.
    Parameter $m$ controls the mesh size $h=2^{-m}h_0$, where $h_0$
    denotes the coarsest mesh size;
    $k$ is the polynomial order of the basis used for the kinematic
    finite element space $\Vcal$, with order $k-1$ for the thermodynamic
    space $\Ecal$;
    $N_v$ and $N_e$ denote the global number of basis elements
    for the finite element spaces $\Vcal$ and $\Ecal$
    respectively.}
	\label{tab:discretize-params}
\end{table}

\begin{figure}[t]
     \centering
     \begin{subfigure}[b]{0.24\textwidth}
         \centering
         \includegraphics[width=\textwidth]{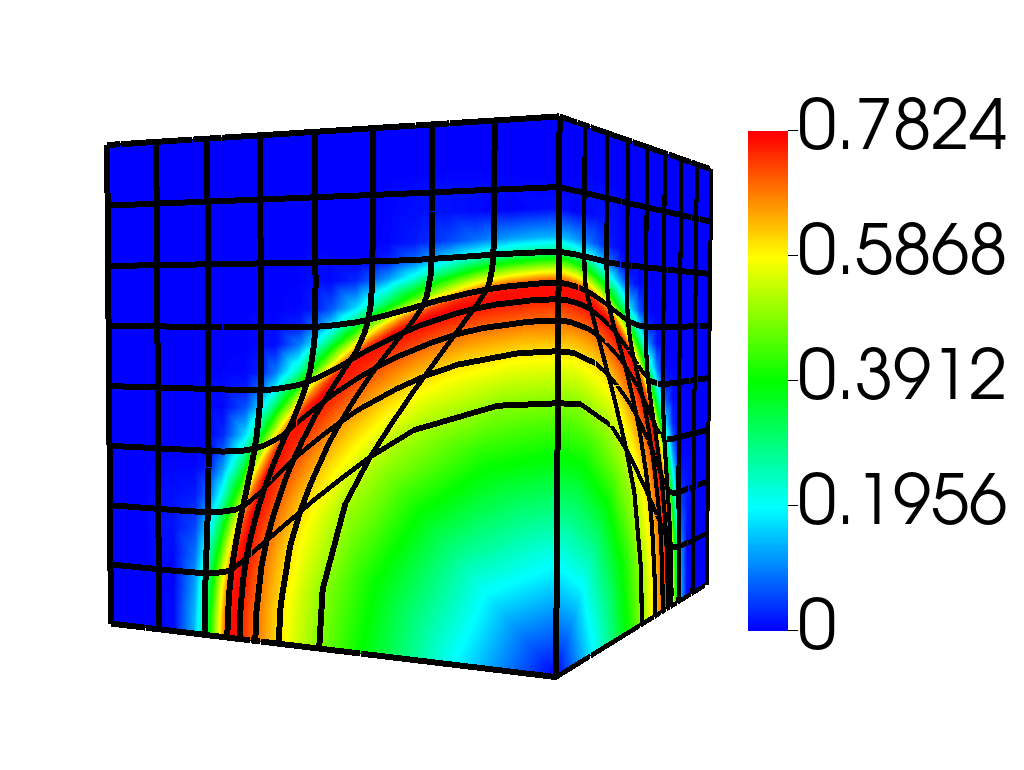}
     \end{subfigure}
     \hfill
     \begin{subfigure}[b]{0.24\textwidth}
         \centering
         \includegraphics[width=\textwidth]{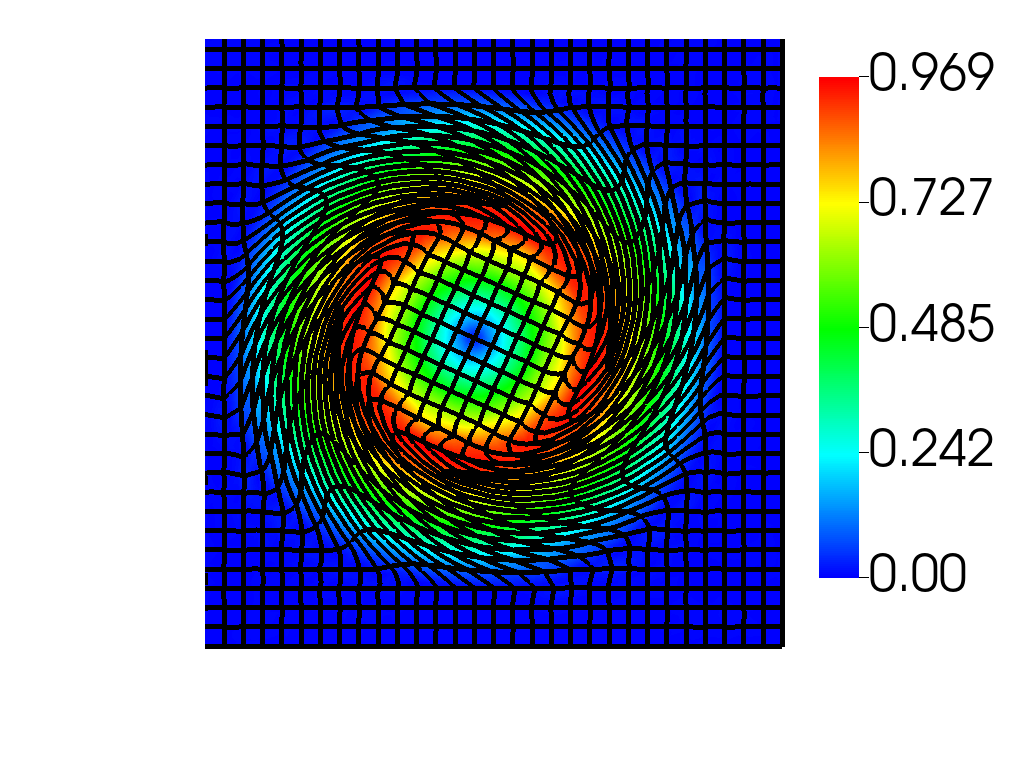}
     \end{subfigure}
     \hfill
     \begin{subfigure}[b]{0.24\textwidth}
         \centering
         \includegraphics[width=\textwidth]{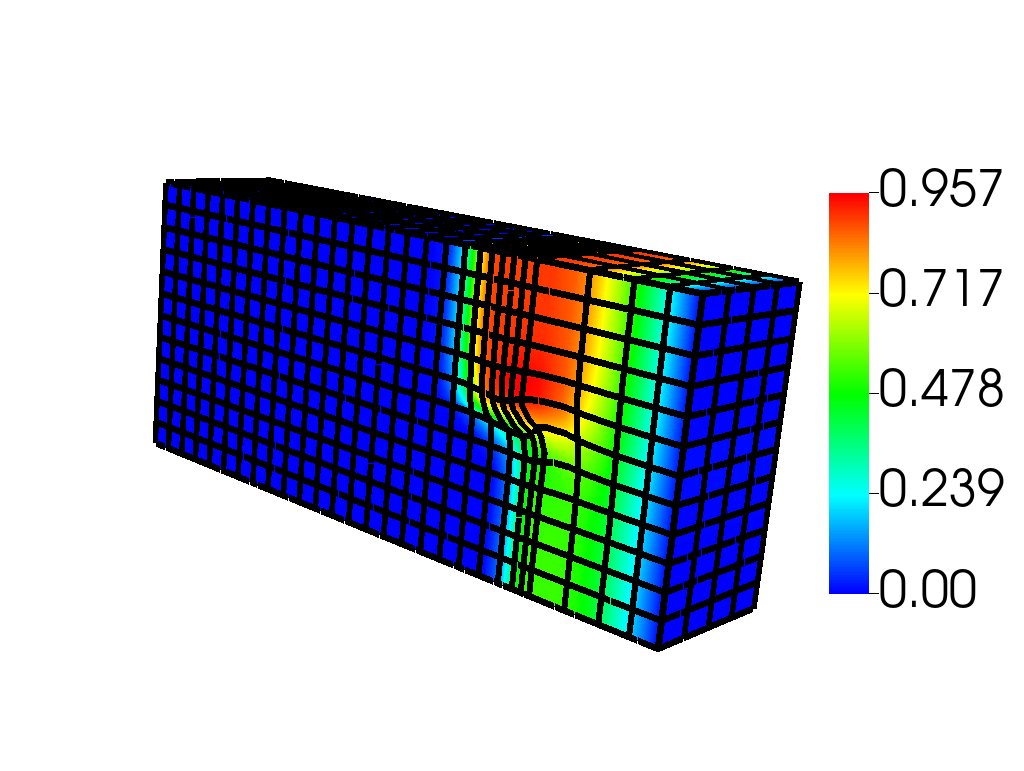}
     \end{subfigure}
     \hfill
     \begin{subfigure}[b]{0.24\textwidth}
         \centering
         \includegraphics[width=\textwidth]{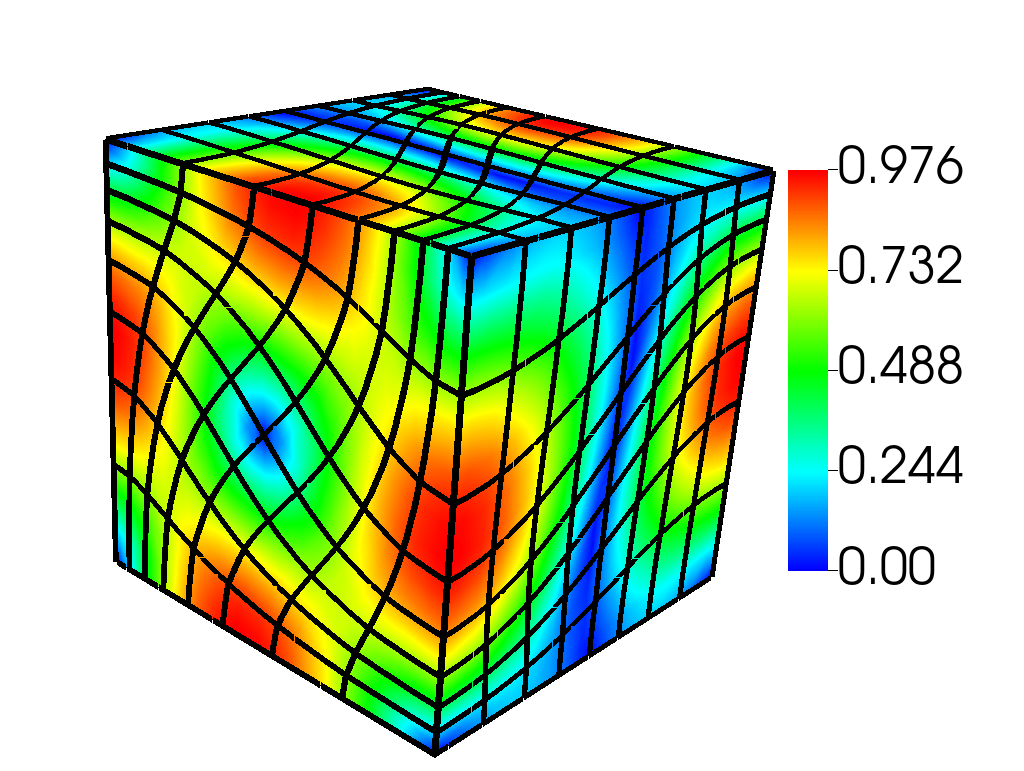}
     \end{subfigure}

     \begin{subfigure}[b]{0.24\textwidth}
         \centering
         \includegraphics[width=\textwidth]{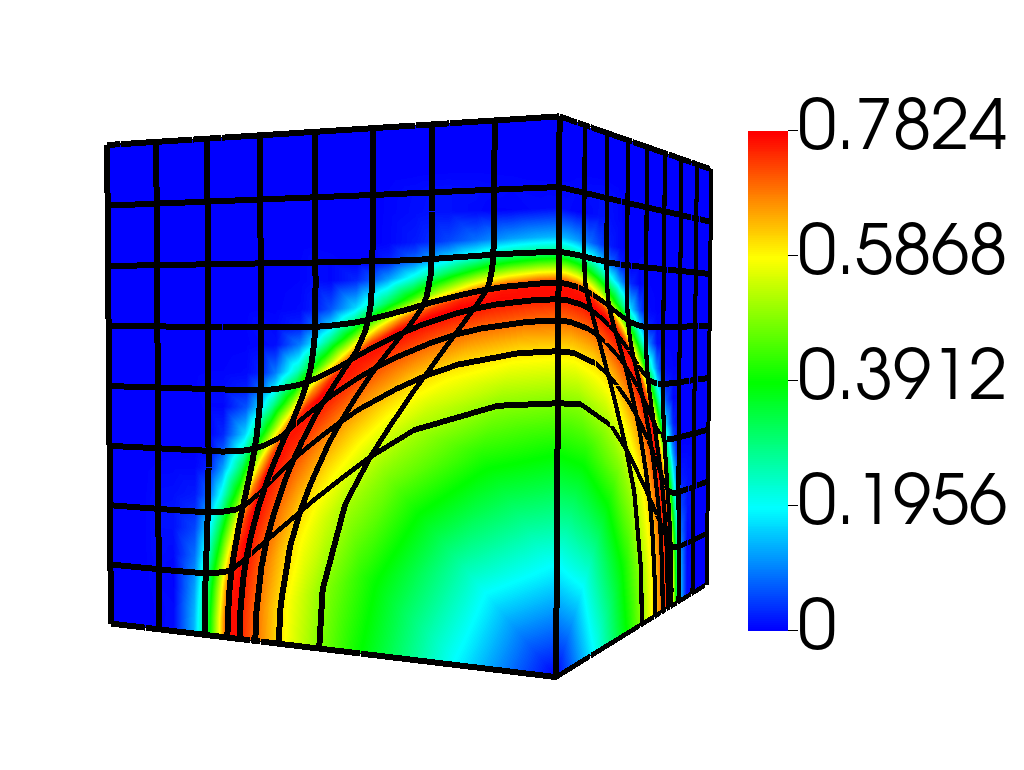}
     \end{subfigure}
     \hfill
     \begin{subfigure}[b]{0.24\textwidth}
         \centering
         \includegraphics[width=\textwidth]{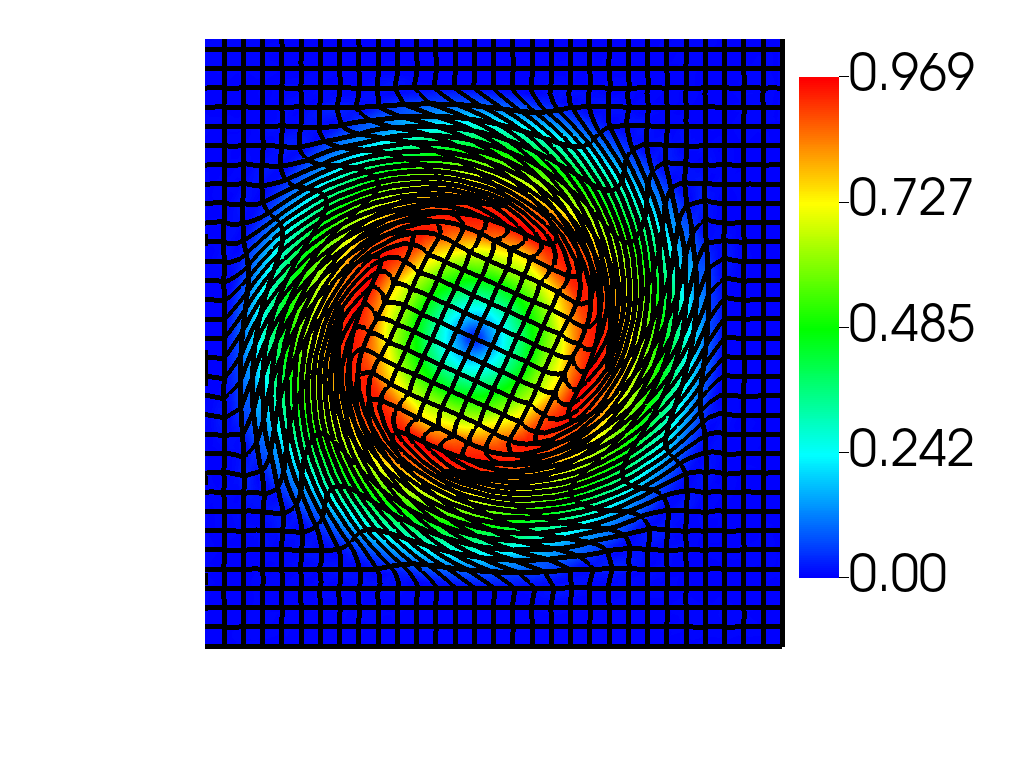}
     \end{subfigure}
     \hfill
     \begin{subfigure}[b]{0.24\textwidth}
         \centering
         \includegraphics[width=\textwidth]{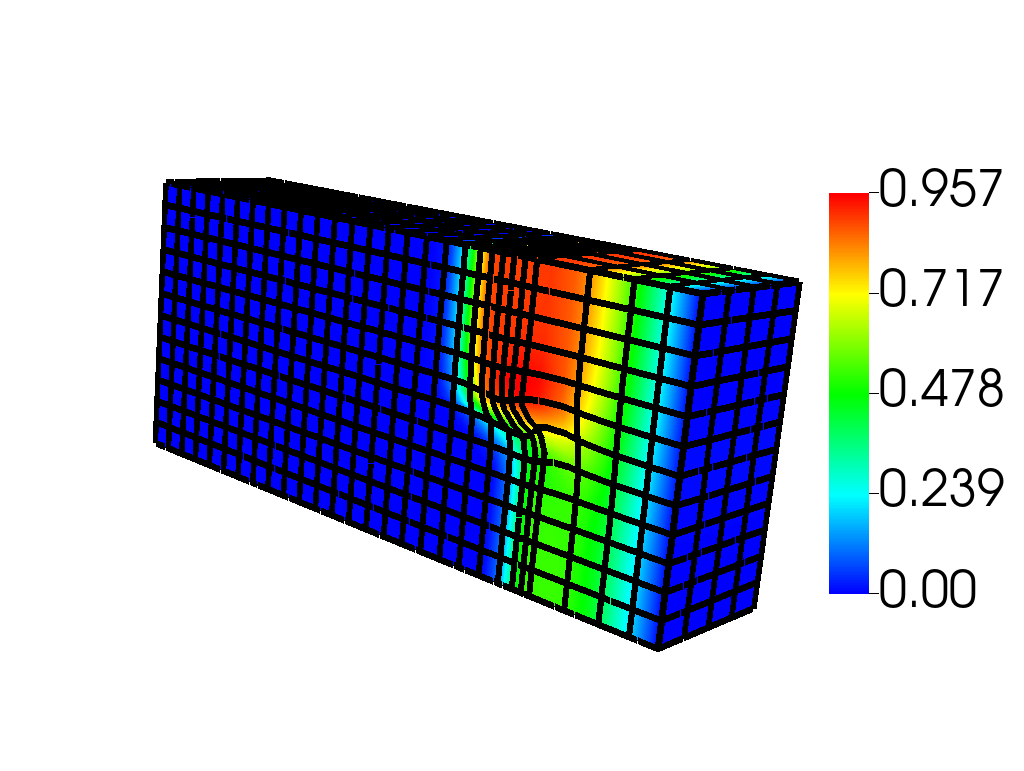}
     \end{subfigure}
     \hfill
     \begin{subfigure}[b]{0.24\textwidth}
         \centering
         \includegraphics[width=\textwidth]{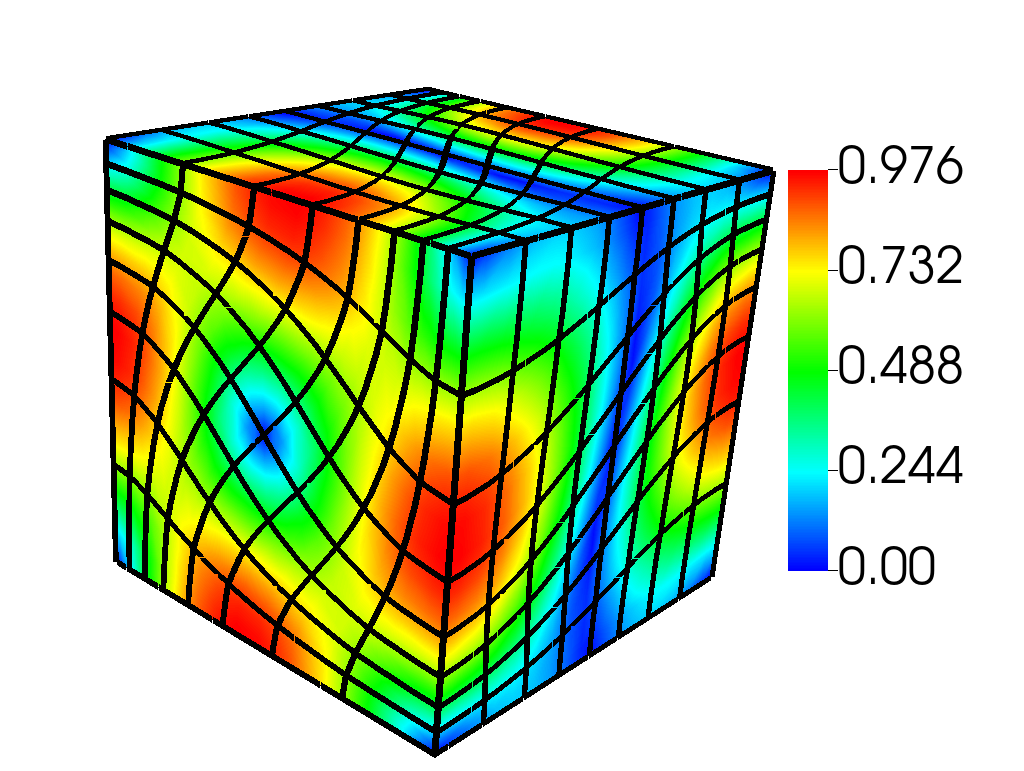}
     \end{subfigure}

     \begin{subfigure}[b]{0.24\textwidth}
         \centering
         \includegraphics[width=\textwidth]{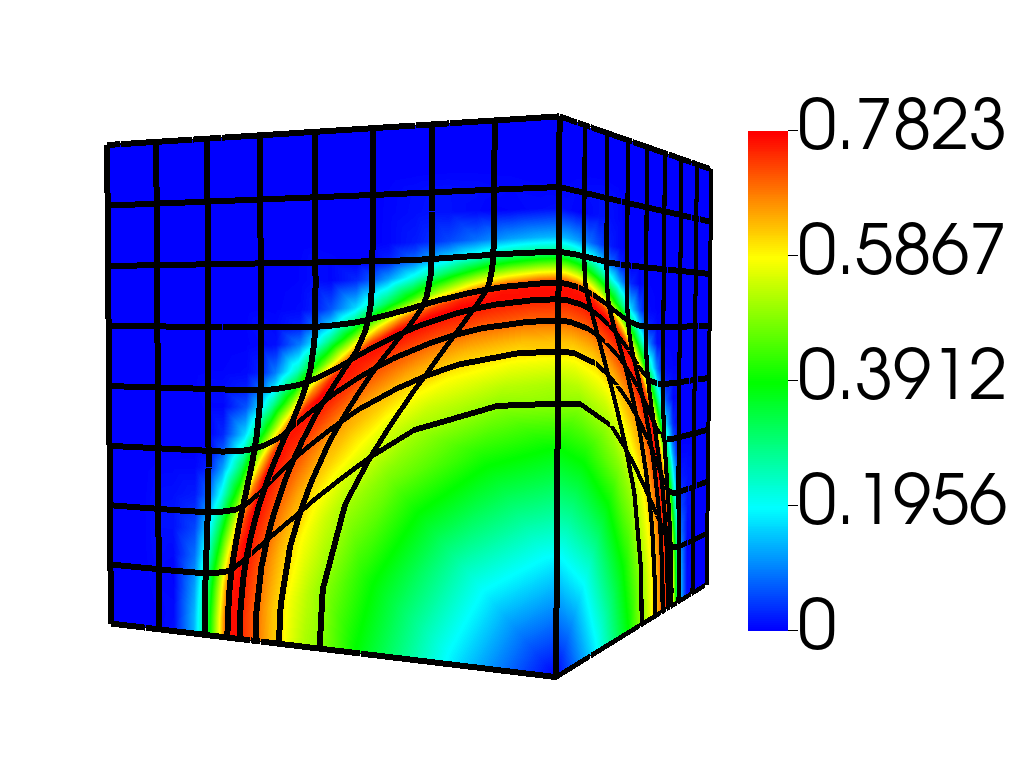}
     \end{subfigure}
     \hfill
     \begin{subfigure}[b]{0.24\textwidth}
         \centering
         \includegraphics[width=\textwidth]{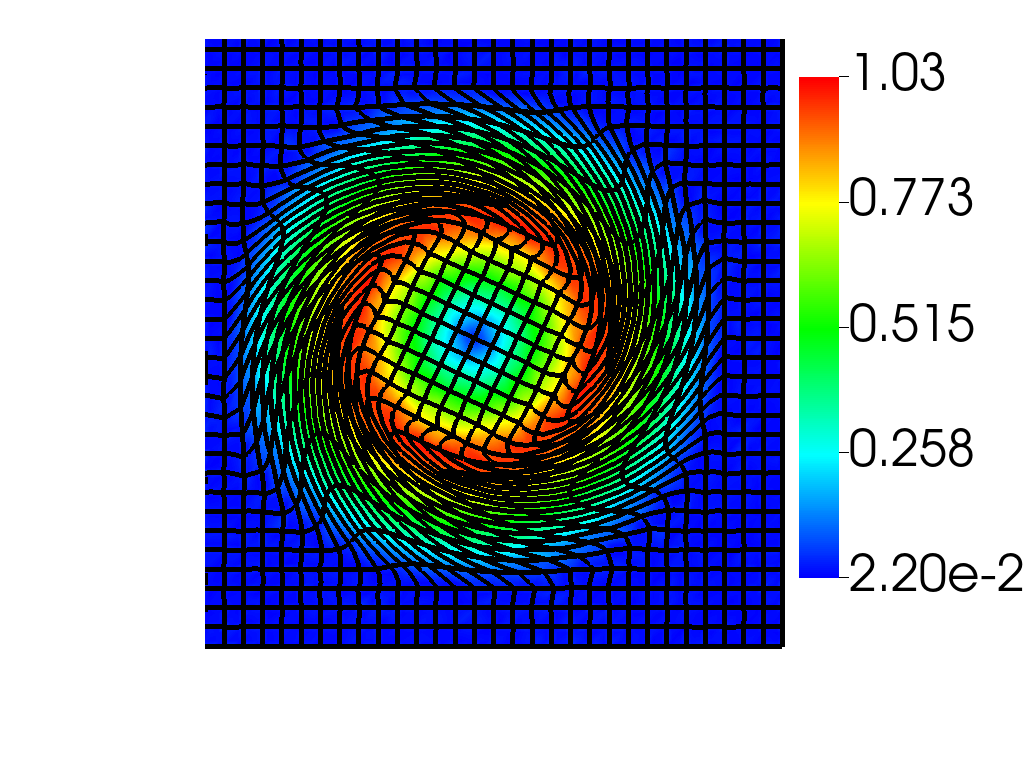}
     \end{subfigure}
     \hfill
     \begin{subfigure}[b]{0.24\textwidth}
         \centering
         \includegraphics[width=\textwidth]{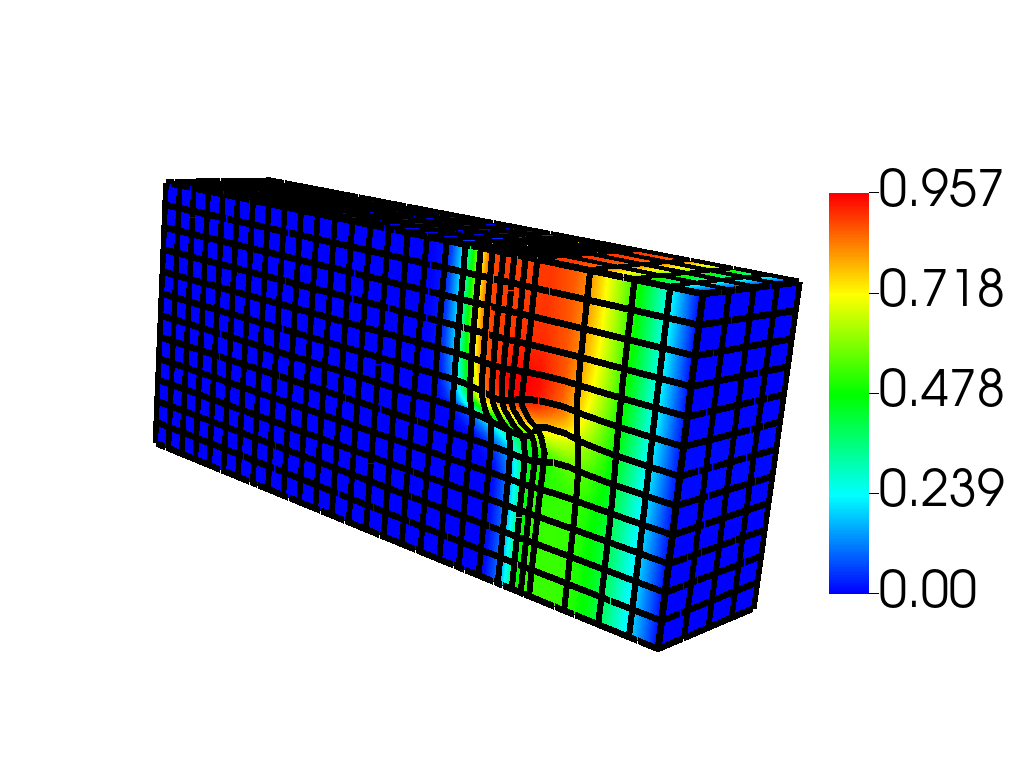}
     \end{subfigure}
     \hfill
     \begin{subfigure}[b]{0.24\textwidth}
         \centering
         \includegraphics[width=\textwidth]{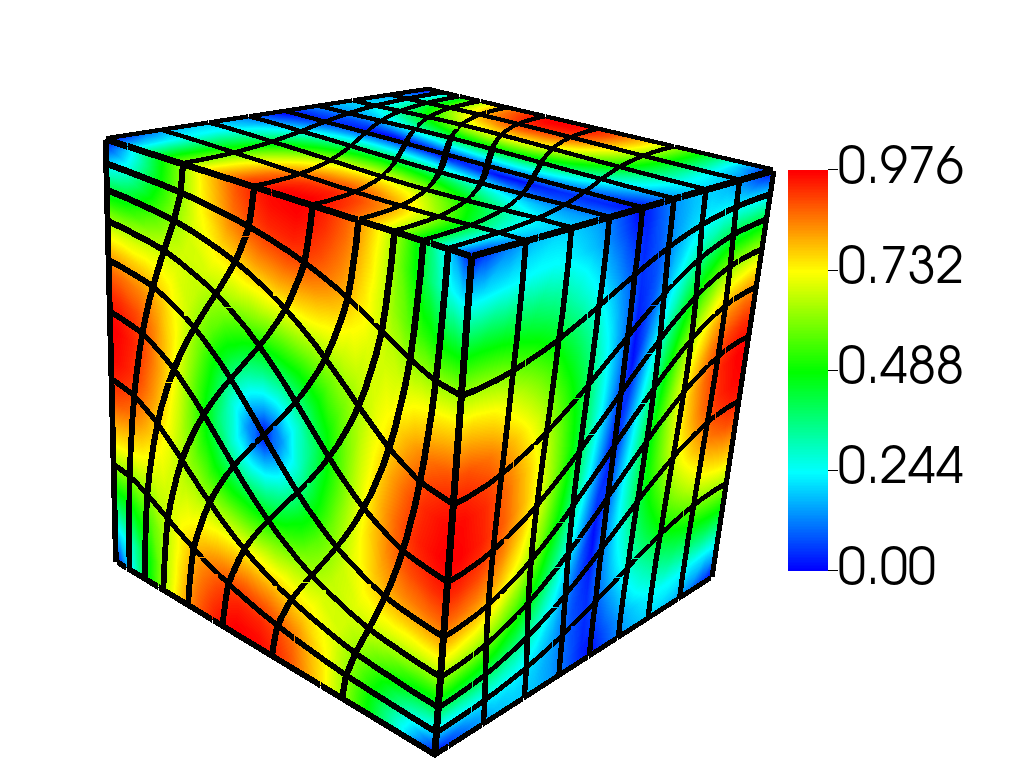}
     \end{subfigure}
        \caption{Final snapshots of the velocity field for each
        simulated problem case.
        The top row corresponds to the results obtained using the
        full model; the middle row using the model hyper-reduced by
        the basic EQP method; the bottom row using the model
        hyper-reduced by the energy conservative EQP method.
        From left to right, the first column corresponds to the
        Sedov problem; the second to Gresho; the third to triple point;
        the fourth to Taylor-Green.}
        \label{fig:results}
\end{figure}

\subsubsection*{Sedov blast}
The Sedov blast problem is a three dimensional generalization of the two
dimensional problem considered in \cite{Dobrev2012}.
The problem simulates the sudden release of energy from a localized source,
leading to the formation of a shock wave propagating in the spatial domain
and converting the initial internal energy into kinetic one.
Its original goal was to test the ability of different artificial viscosity
tensors to maintain radial symmetry of the propagating shock wave on an
irregular discretization mesh.

The computational domain is $\Omega_0=[0,1]^3$.
We use a constant adiabatic index $\gamma=1.4$,
initial density $\rho=1$,
and zero initial velocity.
The initial energy has magnitude $|e|=0.25$ and is all concentrated
at the domain origin.
We employ one of the artificial viscosity stress tensors developed
in \cite{Dobrev2012}
that preserves the propagating shock's radial symmetry.
The simulation time interval is $[0,0.3]$.

\subsubsection*{Gresho vortex}
The Gresho vortex problem is a two dimensional problem originally
posed using the incompressible Euler equations
\cite{Copeland2022}.
Here we use the steady state solution of the original incompressible
problem to build a \emph{manufactured} solution of the compressible
equations.
We do that by substituting the original steady state solution into the
compressible Euler equations and deriving the source terms required to
maintain that steady state \cite{Dobrev2012,Copeland2022}.
The main challenge posed by this problem case is the large deformations
induced on the spatial grid as time evolves.

The computational domain is $\Omega_0=[-0.5,0.5]^2$.
For this problem we use $\gamma=5/3$, initial density $\rho=1$,
initial pressure
\begin{equation*}
p(r,\phi)=\begin{cases}
    5+12.5r^2& \text{if $0\leq r<0.2$}\\
    9-4\log(0.2)+12.5-20r+4\log r& \text{if $0.2\leq r<0.4$}\\
    3+4\log 2& \text{if $r\geq 0.4$}
\end{cases}
\end{equation*}
where $(r,\phi)$ denote the polar coordinates,
and zero artificial viscosity $\sigma_a=0$.
Additionally, we use zero initial radial velocity $v_r=0$ and the initial
angular velocity
\begin{equation*}
v_\phi(r,\phi)=\begin{cases}
    5r& \text{if $0\leq r<0.2$}\\
    2-5r& \text{if $0.2\leq r<0.4$}\\
    0& \text{if $r\geq 0.4$.}
\end{cases}
\end{equation*}
Based on the above, the initial energy is determined by the thermodynamic
equation of state \eqref{eq:EOS}.
The simulation time interval is $[0,0.4]$.

\subsubsection*{Triple point problem}
The triple point problem is a three dimensional Riemann problem
involving two materials in three different states.
The material interaction leads to the formation of shock waves and
generation of vorticity \cite{Dobrev2012,Copeland2022}.

The computational domain is $\Omega_0=[0,7]\times [0,3]\times [0,1.5]$.
We use the adiabatic index
\begin{equation*}
\gamma=\begin{cases}
    1.5& \text{if $x\leq 1$ or $y>1.5$}\\
    1.4& \text{if $x>1$ and $y\leq 1.5$}
\end{cases}
\end{equation*}
zero initial velocity $v=0$, initial density
\begin{equation*}
\rho=\begin{cases}
    1& \text{if $x\leq 1$ or $y\leq 1.5$}\\
    1/8& \text{if $x>1$ and $y>1.5$}
\end{cases}
\end{equation*}
and initial pressure
\begin{equation*}
p=\begin{cases}
    1& \text{if $x\leq 1$}\\
    0.1& \text{if $x>1$.}
\end{cases}
\end{equation*}
The initial energy is computed from the thermodynamic equation of state
\eqref{eq:EOS}.
Given the formation of shock waves, we also use an artificial viscosity
tensor \cite{Dobrev2012}.
The simulated time interval is $[0,0.8]$.

\subsubsection*{Taylor-Green vortex}
The Taylor-Green vortex is a three dimensional generalization of a two
dimensional steady state solution to the incompressible Euler equations
\cite{Dobrev2012,Copeland2022}.
Similarly to the Gresho vortex case, we build a manufactured solution
of the compressible equations by deriving the source terms necessary for
maintaining the original incompressible steady state solution.

The computational domain is $\Omega_0=[0,1]^3$.
We use $\gamma=5/3$, initial density $\rho=1$, initial pressure
\begin{equation*}
    p=100+\frac{[\cos(2\pi x)+\cos(2\pi y)][\cos(2\pi z)+2]-2}{16}
\end{equation*}
and initial velocity
\begin{equation*}
    (v_x,v_y,v_z)=(\sin(\pi x)\cos(\pi y)\cos(\pi z),\,
        -\cos(\pi x)\sin(\pi y)\cos(\pi z),\, 0).
\end{equation*}
The initial energy is computed using the thermodynamic equation of state
\eqref{eq:EOS}.
We use no artificial viscosity stress.
The simulated time interval is $[0,0.4]$.

\begin{table}[t]
    \renewcommand{\arraystretch}{1.1}
    \setlength{\tabcolsep}{10pt}
    \centering
	\begin{tabular}{l l l l l}
	Run & R1 & R2 & R3 & R4\\
	\hline
	Problem & Sedov & Gresho & Triple point & Taylor-Green\\
	$t_f$ & 0.3 & 0.4 & 0.8 & 0.4\\
    $e_{\sigma}$ & 0.9999 & 0.9999 & 0.9999 & 0.9999\\
    $N_s$ & 10 & 10 & 10 & 10\\
    $N_w$ & 91 & 132 & 39 & 418\\
    \hline
	$\epsilon_{x,B}$ & 2.30e-03 & 7.03e-07 & 2.19e-07 & 9.85e-09\\
	$\epsilon_{v,B}$ & 1.37e-02 & 1.19e-05 & 1.46e-05 & 1.41e-05\\
	$\epsilon_{e,B}$ & 2.04e-04 & 2.15e-06 & 4.74e-06 & 2.27e-07\\
	$\Delta E_B$ & 4.23e-04 & 1.72e-08 & 1.15e-06 & 3.03e-11\\
    $\hat{J}_v/J_v$ & $103/32768$ & $47/16384$ & $46/86016$ & $33/32768$\\
    $\hat{J}_e/J_e$ & $61/32768$ & $67/16384$ & $45/86016$ & $48/32768$\\
	$S_B$ & 2.21 & 2.74 & 1.45 & 2.24\\
	\hline
	$\epsilon_{x,C}$ & 4.98e-05 & 3.04e-03 & 4.32e-06 & 2.38e-05\\
	$\epsilon_{v,C}$ & 5.93e-03 & 5.61e-02 & 9.81e-05 & 1.13e-03\\
	$\epsilon_{e,C}$ & 2.34e-03 & 8.87e-03 & 5.43e-05 & 1.36e-05\\
	$\Delta E_{C}$ & 2.28e-13 & 9.28e-15 & 4.43e-14 & 1.38e-14\\
    $\hat{J}/J$ & $179/32768$ & $133/16384$ & $112/86016$ & $104/32768$\\
	$S_{C}$ & 1.68 & 2.60 & 1.38 & 1.69\\
	\hline
	\end{tabular}
    \caption{Parameters and results of the reduced simulations.
    In the table $t_f$ denotes the final time of the simulation;
    $e_\sigma$ the POD energy ratio \eqref{eq:POD-energy};
    $N_s$ the number of samples per window;
    $N_w$ the total number of time windows.
    The symbols $\epsilon_i$ denote the absolute value of the relative
    error of the reduced solution for position ($x$), velocity ($v$)
    and energy ($e$);
    $\Delta E$ the absolute value of the relative energy difference of
    each simulation (start to finish);
    $S$ the speedup of the reduced simulation relative to
    the full one.
    The ratio $\hat{J}/J$ denotes the rounded ratio of quadrature points of
    the reduced rule over that of the full rule, averaged over all windows;
    for basic EQP there are two separate ratios for velocity ($v$)
    and energy ($e$); for conservative EQP there is only one ratio for
    the combined quadrature rule.
    For all results subscript $B$ refers to basic EQP;
    $C$ to energy conservative EQP.}
	\label{tab:results}
\end{table}

\subsection{Results}
Figure \ref{fig:results}
presents the final velocity snapshots obtained using the full
model and the two reduced models for each problem case.
Table \ref{tab:results}
presents the numerical results for the four problem cases
using the two EQP hyper-reduction methods compared in this work.

Starting with the relative energy difference obtained for each
simulated problem, we see that the conservative EQP (CEQP) method
outperforms basic EQP (BEQP) by several orders of magnitude.
The difference in performance is about
9 orders of magnitude for the Sedov case,
6 orders of magnitude for Gresho,
8 orders of magnitude for triple point
and 3 orders of magnitude for Taylor-Green.
In addition, the energy difference achieved with CEQP is
near machine precision for all four simulated problems.
These results prove that the changes made in the derivation of the
reduced quadrature rule do indeed lead to high accuracy in the
conservation of energy, as expected from the theoretical analysis.

In terms of the differences in relative errors achieved for the position,
velocity and energy solutions, the two methods perform similarly
in Sedov and triple point but demonstrate more pronounced
discrepancies in Gresho and Taylor-Green.
More specifically, for the Sedov case both methods lead to
similar errors, with CEQP outperforming BEQP in position and
velocity and conversely in energy.
Similarly, the two methods yield similar results for triple point,
with BEQP performing better by about an order of magnitude in all
three variables.
The situation is different for Taylor-Green and Gresho.
For Taylor-Green, CEQP underperforms by about 2-3 orders of magnitude
in the three solution errors,
while for Gresho by about 3-4 orders of magnitude.
Despite the more pronounced differences in accuracy in these
problem cases, the errors obtained by CEQP are still
reasonably low.

Finally, we arrive at the comparison of the simulation speedup
(acceleration) values obtained with each EQP method.
We see that the two methods yield similar speedup results,
although BEQP is outperforming CEQP in all four problem cases.
The moderate differences in speedup demonstrate that the
changes introduced into the derivation of the
reduced quadrature rule for CEQP has not hurt that aspect of
the method's performance.

\subsection{Discussion}\label{sec:discussion}
Among the presented numerical results, the discrepancy in solution
accuracy between the basic and energy conservative EQP methods is
largest in the Gresho problem case.
In addition, Gresho is the problem case generating the largest values
of solution errors for CEQP, despite the fact that its energy
conservation performance is on par with the other experiments.
As of this writing, it is not well understood why that is the case,
and whether the elevated errors are due to the numerical implementation
of the method or due to some intrinsic challenge posed by the
Gresho problem.

The reported speedup factors are relatively small compared to those
reported in \cite{Copeland2022},
where interpolation methods are used for hyper-reduction.
This is partly because our results are generated using a
numerical implementation of the basic and energy conservative
EQP methods that is not fully optimized yet.
More specifically, the used implementation performs some of the
lifting of the reduced state vectors to full state vectors using
all degrees of freedom, instead of using only the
degrees of freedom sampled by the reduced quadrature rules.
Independently of those optimizations,
we also expect that the speedup values will increase with
increasing values for the mesh refinement parameter $m$
and the FEM basis polynomial order $k$.

A direction of future research is comparing the performance
of different NNLS algorithms, especially as it relates to the
sparsity and accuracy of the derived reduced quadrature rules.
As shown in Table \ref{tab:results}, the Lawson-Hanson algorithm
used in this work leads to sparse quadrature rules.
Nevertheless, it is not clear how different methods
would compare against our reported results.
In addition, we intend to investigate why
the $LQ$ preconditioning of the NNLS constraints matrix
(ensuring orthogonality of constraints)
leads to improved performance of the Lawson-Hanson algorithm.
Although observed empirically, to our knowledge it is not
fully understood why that happens
\cite{Humphry2025}.

In the present work we focus on reproductive simulations,
where each reduced model is tested using initial conditions that
have been used in its training.
The developed methods readily generalize to predictive simulations,
where out of sample initial conditions are used to test the model's
performance.
We intend to pursue this in the near future.

Conceptually, the model reduction approach employed in this work
builds upon the finite element framework.
Starting with the FEM discretization of the problem, with the
FEM basis functions acting as the trial and test functions,
we proceed to replace these functions by ones derived from
simulation data.
One important difference between the two model formulations is that
the FEM model employs trial and test functions that are local in
space, meaning that their support is confined within a small number
of elements.
On the contrary, the reduced models employ trial and/or test
functions with a support that is generally extended across
a large number of elements, possibly including the whole
spatial domain.
Although this allows us to use a smaller number of degrees of freedom
to represent the considered dynamics, it can also lead to some of the
extrapolation problems encountered by reduction methods to date,
since the accurate representation and prediction of out of sample
dynamics cannot be guaranteed.

There is ongoing work related to deriving data based reduction methods
that use spatially local reduced basis functions, bringing them
closer to a data based analog of the finite element framework
\cite{Eftang2013,Phuong2013,McBane2021,Chung2024,Choi2025}.
It is interesting to explore the applicability of such methods to
the problem setting considered in the present work, especially since
quadrature hyper-reduction methods are naturally well suited to the
finite element framework.

\section{Conclusion}\label{sec:conclusion}
We developed reduced models for nonlinear Lagrangian hydrodynamics
problems using quadrature hyper-reduction to approximate the
evaluation of the nonlinear terms with reduced computational cost.
The hyper-reduction was performed using the EQP method,
which is well suited to the finite element discretization of
the considered nonlinear conservation laws.

Starting with an application of the basic EQP method, we developed
a variant of the method that preserves the energy conservation of
the full model in the reduction process.
In addition to proving exact energy conservation in
the resulting reduced model, we performed numerical simulations of
four different benchmark problems demonstrating the conservation of
energy to near machine precision.
Our reported results show that the additional benefit of energy
conservation is obtained with only a moderate reduction in the
accuracy or speedup of the reduced simulations compared to the
basic EQP method.

\section*{Acknowledgments}
This work was performed under the auspices of the
U.S. Department of Energy (DOE)
by Lawrence Livermore National Laboratory (LLNL)
under Contract No. DE-AC52–07NA27344.
Part of this work was performed while C. Vales was
a research intern at LLNL supported by the
Mathematical Sciences Graduate Internship (MSGI)
program administered by the
U.S. National Science Foundation.
C. Vales also acknowledges support from the
U.S. Department of Energy under grant DE-SC0025101.
Y. Choi was supported for this work by the
U.S. Department of Energy, Office of Science,
Office of Advanced Scientific Computing Research,
as part of the CHaRMNET Mathematical Multifaceted Integrated
Capability Center (MMICC) program,
under Award Number DE-SC0023164.
IM release: LLNL-JRNL-2010206.

\appendix
\section{Implementation details}\label{app:implementation}
In this section we offer additional details on the
implementation of the basic and conservative EQP methods
developed in the main text.

\subsection{Basic EQP}\label{app:beqp}
We consider the nonnegative least squares (NNLS) formulation
used for the basic EQP method developed in Section \ref{sec:eqp},
as well as the preconditioning operations performed
before solving the formed NNLS problem.

\subsubsection*{NNLS formulation}
To solve the linear optimization problem for the reduced
quadrature rule described in Section \ref{sec:eqp},
we reformulate it as an NNLS problem for the vector of quadrature
weights $\tilde{\vect{\rho}}^v\in\Rbb^{J_v}$
\cite{Du2022,Sleeman2022}.
Given the known vector of full quadrature weights
$\vect{\rho}^v\in\Rbb^{J_v}$,
we form the accuracy constraints matrix
$\matr{C}_v\in\Rbb^{N_c^v\times J_v}$, $N_c^v\ll J_v$,
with entries
\begin{equation*}
    C_{v,sj}=g_{v,i}^\psi(x^v_j,\tilde{w}(t_k)),\quad
    0\leq i<n_v,\quad
    0\leq k<N_t,\quad
    0\leq j<J_v
\end{equation*}
with row index $0\leq s=i+kn_v<N_c^v$.
Denoting $\vect{b}=\matr{C}_v\vect{\rho}^v\in\Rbb^{N_c^v}$,
the vector $\tilde{\vect{\rho}}^v$
is found by solving the NNLS problem
\begin{equation}\label{eq:nnls-problem}
    \tilde{\vect{\rho}}^v=\argmin_{\vect{r}\geq\vect{0}}
        \lVert\matr{C}_v\vect{r}-\vect{b}\rVert_2^2.
\end{equation}
More specifically, for a given vector of error thresholds
$\vect{\epsilon}\in\Rbb^{N_c}$,
we use the iterative Lawson-Hanson algorithm to solve the following
problem for $\vect{r}\geq\vect{0}$
\begin{equation}\label{eq:nnls-problem2}
    |\vect{c}_v^s\cdot\vect{r}-b_s|\leq\epsilon_s,\quad
        0\leq s<N_c
\end{equation}
where $\vect{c}_v^s\in\Rbb^{J_v}$ denotes the $s$th row
of matrix $\matr{C}_v$,
and $b_s$, $\epsilon_s$ the $s$th respective entries of vectors
$\vect{b}$ and $\vect{\epsilon}$
\cite{Lawson1995,Chapman2017}.

In the NNLS formulation, \eqref{eq:nnls-problem} is used to
enforce the accuracy constraints of the linear optimization problem.
The nonnegativity constraints are enforced implicitly by the employed
algorithm.
Finally, compared to the original form of the problem, the NNLS
formulation does not explicitly promote sparsity by minimizing
the 1-norm of the solution vector $\tilde{\vect{\rho}}^v$.
Nevertheless, several numerical experiments---including the ones
we report in this work---show that sparse solutions are obtained
by the Lawson-Hanson algorithm
\cite{Du2022,Sleeman2022,Larsson2026}.

\subsubsection*{Preconditioning}
Before solving \eqref{eq:nnls-problem},
we perform two conditioning operations on the constraints matrix
$\matr{C}_v$, both to reduce the number of iterations required by
the Lawson-Hanson algorithm to converge and to increase the derived
solution's accuracy \cite{Humphry2025}.
First, we rescale each row of $\matr{C}_v$
so that its entries have absolute value less than or equal to one.
Second, we use the reduced LQ decomposition of
$\matr{C}_v=\matr{L}_v\matr{Q}_v$,
where $\matr{L}_v\in\Rbb^{N_c^v\times N_c^v}$ is lower triangular
and $\matr{Q}_v\in\Rbb^{N_c^v\times J_v}$ has orthonormal rows,
to transform the NNLS problem \eqref{eq:nnls-problem} to
\begin{equation}\label{eq:nnls-lq-problem}
    \tilde{\vect{\rho}}^v=\argmin_{\vect{r}\geq \vect{0}}
        \bigl\|\matr{Q}_v\vect{r}-\tilde{\vect{b}}\bigr\|_2^2
\end{equation}
where $\tilde{\vect{b}}=\matr{Q}_v\vect{\rho}^v\in\Rbb^{N_c^v}$,
under the assumption that $\matr{L}_v$ is nonsingular.
In addition, we modify the error thresholds
$\epsilon_s$ accordingly to ensure that the solution obtained
from \eqref{eq:nnls-lq-problem}
will also satisfy
\eqref{eq:nnls-problem2}
with the original error parameters.
We define the new error thresholds $\tilde{\epsilon}_s$ by
\begin{equation}\label{eq:nnls-lq-error}
    \tilde{\epsilon}_s=\min_{s\leq j<N_c}
        \frac{1}{(j+1)\vert L_{js}\vert}\,\epsilon_j,\quad
        0\leq s<N_c
\end{equation}
which is sufficient to ensure that the transformed problem's
solution satisfies \eqref{eq:nnls-problem2} \cite{Humphry2025}.
The transformed NNLS problem reads
\begin{equation}\label{eq:nnls-lq-problem2}
    |\vect{q}_v^s\cdot\vect{r}-\tilde{b}_s|
        \leq\tilde{\epsilon}_s,\quad 0\leq s<N_c
\end{equation}
where
$\vect{q}_v^s\in\Rbb^{J_v}$ denotes the $s$th row of
matrix $\matr{Q}_v$.

\subsection{Conservative EQP}
We consider the implementation of the NNLS problem for the energy
conservative EQP method developed in Section \ref{sec:ceqp},
as well as an additional basis enrichment operation
performed to ensure conservation of the discrete total
energy for simulations consisting of multiple time windows.

\subsubsection*{NNLS formulation}
Following the approach outlined in Section \ref{sec:eqp}
and \ref{app:beqp},
the reduced quadrature rule is computed by solving an NNLS problem
with a total of $N_c=(n_v+n_e)N_t$ accuracy constraints,
where $N_t$ denotes the number of snapshots of $\tilde{w}$
used to form the constraints.
The constraints are used to control the accuracy of a
single reduced quadrature rule that will be used for the
approximation of both force vectors
$\vect{F}_v^\phi$ and $\vect{F}_e^\phi$
introduced in \eqref{eq:rom-force-ceqp}
\begin{equation*}
    \vect{F}_v^\phi(\tilde{\vect{w}})=
        \int_{\Omega(t)}\vect{g}_v^\phi(x,\tilde{w})dx\qquad
    \vect{F}_e^\phi(\tilde{\vect{w}},\tilde{\vect{v}})=
        \int_{\Omega(t)}\vect{g}_e^\phi(x,\tilde{w},\tilde{v})dx
\end{equation*}
where the respective integrands
$\vect{g}_v^\phi$ and $\vect{g}_e^\phi$
take values in $\Rbb^{n_v}$ and $\Rbb^{n_e}$
and have components
\begin{equation*}
    g_{v,i}^\phi(x,\tilde{w})=
        \sigma(\tilde{w})(x):\nabla\phi^v_i(x)\qquad
    g_{e,i}^\phi(x,\tilde{w},\tilde{v})=
        (\sigma(\tilde{w})(x):\nabla\tilde{v}(x))\,\phi^e_i(x).
\end{equation*}
The accuracy constraints are encoded in the matrix
$\matr{C}\in\Rbb^{N_c\times J}$,
where the first $n_vN_t$ rows are used to enforce the
constraints related to $\vect{F}_v^\phi$
\begin{equation*}
    C_{sj}=g_{v,i}^\phi(x_j,\tilde{w}(t_k)),\quad
    0\leq i<n_v,\quad 0\leq k<N_t,\quad 0\leq j<J
\end{equation*}
with constraint index $s=i+kn_v$,
and the next $n_eN_t$ rows the constraints
related to $\vect{F}_e^\phi$
\begin{equation*}
    C_{sj}=g_{e,i}^\phi(x_j,\tilde{w}(t_k),\tilde{v}(t_k)),\quad
    0\leq i<n_e,\quad 0\leq k<N_t,\quad 0\leq j<J
\end{equation*}
with constraint index $s=n_vN_t+i+kn_e$.
In the above, $\tilde{w}(t_k)$
denotes the snapshots used to form the constraints.

\subsubsection*{Reduced bases enrichment}
When the simulation consists of more than one time window,
an additional modification is made to the reduced bases
$\matr{\Phi}_v$ and $\matr{\Phi}_e$
to ensure that the total energy is conserved across windows.
Let $\tilde{\vect{v}}^{(n-1)}$ and $\tilde{\vect{e}}^{(n-1)}$
denote the velocity and energy solution vectors after the final
timestep of window $n-1$, $n>1$, has been completed
\begin{equation*}
    \tilde{\vect{v}}^{(n-1)}=
        \matr{\Phi}_v^{(n-1)}\hat{\vect{v}}^{(n-1)}\qquad
    \tilde{\vect{e}}^{(n-1)}=
        \matr{\Phi}_e^{(n-1)}\hat{\vect{e}}^{(n-1)}
\end{equation*}
where superscripts are used to mark the time window index.
Accordingly, we denote by
$\tilde{\vect{v}}^{(n)}$ and $\tilde{\vect{e}}^{(n)}$
the corresponding vectors after the change from window $n-1$ to
window $n$ has been performed
\begin{align*}
    \tilde{\vect{v}}^{(n)}&=\matr{\Phi}_v^{(n)}\hat{\vect{v}}^{(n)}=
        \matr{\Phi}_v^{(n)}\matr{\Phi}_v^{(n)\top}
        \matr{\Phi}_v^{(n-1)}\hat{\vect{v}}^{(n-1)}\\
    \tilde{\vect{e}}^{(n)}&=\matr{\Phi}_e^{(n)}\hat{\vect{e}}^{(n)}=
        \matr{\Phi}_e^{(n)}\matr{\Phi}_e^{(n)\top}
        \matr{\Phi}_e^{(n-1)}\hat{\vect{e}}^{(n-1)}.
\end{align*}
Note that no timestepping has been performed after switching from
window $n-1$ to $n$, so the two pairs of vectors should be identical.
To ensure that this is the case, we extend the basis matrices
$\matr{\Phi}_v^{(n)}$ and $\matr{\Phi}_e^{(n)}$
by adding the respective solution vectors
$\tilde{\vect{v}}^{(n-1)}$ and $\tilde{\vect{e}}^{(n-1)}$
as an additional column and reorthonormalize.
This guarantees that 
$\tilde{\vect{v}}^{(n-1)}$ and $\tilde{\vect{e}}^{(n-1)}$
are in the span of the respective basis matrices
$\matr{\Phi}_v^{(n)}$ and $\matr{\Phi}_e^{(n)}$.
In turn, this ensures that energy is conserved exactly when
changing windows
\begin{equation*}
    TE(\tilde{\vect{w}}^{(n)})-TE(\tilde{\vect{w}}^{(n-1)})
        =\vect{1}_\Ecal^{\top}\matr{M}_e
        (\tilde{\vect{e}}^{(n)}-\tilde{\vect{e}}^{(n-1)})
        +\frac{1}{2}(\tilde{\vect{v}}^{(n)\top}
        \matr{M}_v\tilde{\vect{v}}^{(n)}-\tilde{\vect{v}}^{(n-1)\top}
        \matr{M}_v\tilde{\vect{v}}^{(n-1)})=0.
\end{equation*}
No such extension is required for the first window ($n=0$).
Since the extension of the basis matrices
$\matr{\Phi}_v^{(n)}$ and $\matr{\Phi}_e^{(n)}$
requires knowledge of the state vectors
$\tilde{\vect{v}}^{(n-1)}$ and $\tilde{\vect{e}}^{(n-1)}$,
this operation must be performed during the online stage at the time
of the window change.
To extend and incrementally reorthonormalize the basis matrices
efficiently, we employ the modified Gram-Schmidt algorithm with
double orthogonalization \cite{Giraud2005}.

\bibliographystyle{elsarticle-num}
\biboptions{sort&compress}
\bibliography{refs}

\end{document}